\newcommand{\set}[1]{\left\{ #1 \right\}}
\newcommand{\abs}[1]{\left| #1 \right|}
\newcommand{\wt}[1]{\widetilde{ #1}}
\newcommand{\wtalpha}{\wt{\alpha}}
\newcommand{\F}{\mathbb{F}}
\newcommand{\Z}{\mathbb{Z}}
\newcommand{\g}{\mathfrak{g}}
\newcommand{\fu}{\mathfrak{u}}
\DeclareMathOperator{\chr}{char}
\DeclareMathOperator{\Der}{Der}
\DeclareMathOperator{\Ext}{Ext}
\DeclareMathOperator{\opH}{H}
\DeclareMathOperator{\Hbul}{\opH^\bullet}
\DeclareMathOperator{\Hom}{Hom}
\DeclareMathOperator{\Inn}{Inn}
\DeclareMathOperator{\Lie}{Lie}
\DeclareMathOperator{\res}{res}
\DeclareMathOperator{\soc}{soc}
\newcommand{\ve}{\varepsilon}
\newcommand{\Fp}{\F_p}
\newcommand{\Fq}{\F_q}
\newcommand{\Gfp}{G(\Fp)}
\newcommand{\Gfq}{G(\Fq)}
\newcommand{\Bfq}{B(\Fq)}
\newcommand{\Ufq}{U(\Fq)}
\newcommand{\Tfq}{T(\Fq)}
\newcommand{\Ug}{U(\g)}
\newcommand{\ug}{u(\g)}
\numberwithin{equation}{subsection}
\newtheorem{theorem}[equation]{Theorem}
\newtheorem{proposition}[equation]{Proposition}
\newtheorem{corollary}[equation]{Corollary}
\newtheorem{lemma}[equation]{Lemma}
\theoremstyle{definition}
\newtheorem{question}[equation]{Question}
\newtheorem{example}[equation]{Example}
\newtheorem{remark}[equation]{Remark}
\title{Bounding the dimensions of rational cohomology groups}
\author[Bendel]{Christopher P.\ Bendel}
\address{Department of Mathematics \\ University of Wisconsin--Stout \\ Menomonie, WI 54751 }
\email{bendelc@uwstout.edu}
\author[Boe]{Brian D.\ Boe}
\address{Department of Mathematics \\ University of Georgia \\ Athens, GA 30602}
\email{brian@math.uga.edu}
\author[Drupieski]{Christopher M.\ Drupieski}
\address{Department of Mathematical Sciences \\ DePaul University \\ Chicago, IL 60614}
\email{cdrupies@depaul.edu}
\author[Nakano]{Daniel K.\ Nakano}
\address{Department of Mathematics \\ University of Georgia \\ Athens, GA 30602}
\thanks{Research of the fourth author was partially supported by NSF grant  DMS-1002135}
\email{nakano@math.uga.edu}
\author[Parshall]{Brian J.\ Parshall}
\address{Department of Mathematics \\ University of Virginia \\ Charlottesville, VA 22903 }
\thanks{Research of the fifth author was partially supported by NSF grant  DMS-1001900}
\email{bjp8w@virginia.edu}
\author[Pillen]{Cornelius Pillen}
\address{Department of Mathematics \\ University of South Alabama \\ Mobile, AL 36688}
\email{pillen@southalabma.edu}
\author[Wright]{Caroline B.\ Wright}
\address{Louise S. McGehee School \\ 2343 Prytania Street \\New Orleans, LA 70130   }
\email{carriew@mcgeheeschool.com} 
\subjclass[2010]{Primary 20G10.}
\date{\today}
\begin{document}

\begin{abstract}
Let $k$ be an algebraically closed field of characteristic $p > 0$, and let $G$ be a simple simply-connected algebraic group over $k$ that is defined and split over the prime field $\Fp$. In this paper we investigate situations where the dimension of a rational cohomology group for $G$ can be bounded by a constant times the dimension of the coefficient module. We then demonstrate how our results can be applied to obtain effective bounds on the first cohomology of the symmetric group. We also show how, for finite Chevalley groups, our methods permit significant improvements over previous estimates for the dimensions of second cohomology groups.
\end{abstract}

\maketitle

\section{Introduction}

\subsection{} \label{subsection:AIM2007}

Let $k$ be a field, $S$ be a finite group, and $V$ be an absolutely irreducible $kS$-module on which $S$ acts faithfully. In 1986, Guralnick conjectured the existence of a universal upper bound, independent of $k$, $S$, or $V$, for the dimension of the first cohomology group $\opH^1(S,V)$ \cite{Guralnick:1986}. Based on the evidence available at the time, Guralnick suggested that a suitable upper bound might be $2$, though later work by Scott and his student McDowell showed that if $S = PSL_6(\Fp)$ with $p$ sufficiently large, then there exists an absolutely irreducible $k S$-module $V$ on which $S$ acts faithfully with $\dim \opH^1(S,V) = 3$ \cite{Scott:2003}. Still, the existence of \emph{some} universal upper bound remained a plausible idea, and the best guess for a particular bound remained $3$ until the recent American Institute of Mathematics (AIM) workshop ``Cohomology bounds and growth rates'' in June 2012. As reported by AIM and the workshop organizers \cite{AIM:2012}, on day three of the workshop, Scott reported on calculations conducted by his student Sprowl \cite{Scott:2013}, from which they could deduce the existence of $4$- and $5$-dimensional examples for $\opH^1(S,V)$ when $S = PSL_7(\Fp)$ with $p$ sufficiently large. These calculations were independently confirmed by L\"{u}beck, who subsequently showed that large-dimensional examples also arise when $S = \Gfp$ is a finite group of Lie type with underlying root system of type $E_6$ or $F_4$. Among all the dimensions computed during and in the weeks after the workshop, the largest was $\dim \opH^1(S,V) = 469$ for $S = PSL_8(\Fp)$ with $p$ some sufficiently large prime number. These particular large-dimensional  examples do not disprove Guralnick's conjecture, but they do make it seem less likely that any universal upper bound exists.

Even if no universal upper bound exists for the dimensions of the cohomology groups $\opH^1(S,V)$, the computer calculations of Scott, Sprowl, and L\"{u}beck demonstrate how, by exploiting connections between the cohomology of semisimple algebraic groups and the combinatorics of Kazhdan--Lusztig polynomials, it is possible to obtain much information about the size of $\opH^1(S,V)$ when $S$ is a finite group of Lie type. Indeed, a thread of research leading up to the 2012 AIM workshop, and since, has been the desire to obtain, or show the existence of, bounds on the dimensions of the cohomology groups $\opH^m(\Gfq,V)$ that depend only on the (rank of the) underlying root system. Specifically, let $G$ be a simple simply-connected algebraic group over an algebraically closed field $k$ of characteristic $p > 0$. Assume $G$ is defined and split over $\Fp$. Given $q = p^r$ with $r \geq 1$, let $\Gfq$ be the finite subgroup of $\Fq$-rational points in $G$. Cline, Parshall, and Scott \cite{Cline:2009} proved that there exists a constant $C(\Phi)$, depending only on the underlying root system $\Phi$, such that for each irreducible $k\Gfq$-module $V$,
\begin{equation} \label{eq:CPSbound}
\dim \opH^1(\Gfq,V) \leq C(\Phi).
\end{equation} 
Arguing via different methods, Parker and Stewart \cite{Parker:2012} determined an explicit (large) constant that can be used for $C(\Phi)$ in \eqref{eq:CPSbound}, and which is given by a formula depending on the rank of $\Phi$. On the other hand, if $K$ is an algebraically closed field of characteristic $r$ with $r$ relatively prime to $q$, and if $G'$ is a finite simple group of Lie type, then Guralnick and Tiep showed for each irreducible $KG'$-module $V$ that $\dim \opH^1(G',V) \leq \abs{W} + e$ \cite[Theorem 1.3]{Guralnick:2011}. Here $W$ is the Weyl group of $G'$, and $e$ is the twisted Lie rank of $G'$. Except for certain small values of $q$ depending on the Lie type of $G$, the finite group $\Gfq$ is a central extension of a nonabelian simple group of Lie type; see \cite[2.2.6--2.2.7]{Gorenstein:1998}.

Parshall and Scott \cite{Parshall:2011a} later extended the Cline--Parshall--Scott result \eqref{eq:CPSbound} to show for each irreducible rational $G$-module that 
\begin{equation} \label{eq:PSbound}
\dim\opH^m(G,V)\leq c(\Phi,m)
\end{equation}
for some constant $c(\Phi,m)$ depending only on $\Phi$ and the degree $m$. Then Bendel et al.\ \cite{Bendel:2012} succeeded in finding a simultaneous generalization of \eqref{eq:CPSbound} and \eqref{eq:PSbound}, showing for each irreducible $k\Gfq$-module $V$ that
\begin{equation}
\dim \opH^{m}(\Gfq,V) \leq C(\Phi,m)
\end{equation}
for some constant $C(\Phi,m)$ depending only on $\Phi$ and $m$. Similar results were also obtained in \cite{Parshall:2011a} bounding the higher extension groups $\Ext^m_G(V_1,V_2)$, and in \cite{Bendel:2012} bounding the groups $\Ext^m_{\Gfq}(V_1,V_2)$, assuming that $V_1$ and $V_2$ are irreducible rational $G$-modules (resp.\ $k\Gfq$-modules), though some additional restrictions on $V_1$ are necessary when $m > 1$.

\subsection{}

In a different direction from the results described above, in this paper we explore bounds on the dimensions of cohomology groups that depend not on the rank of an underlying root system, but on the dimension of the coefficient module. This is in the spirit of a number of earlier general results providing bounds on the dimensions of $\opH^1(S,V)$ and $\opH^2(S,V)$ for $S$ a finite group. Specifically, for $m = 1$, Guralnick and Hoffman proved:

\begin{theorem} \textup{\cite[Theorem 1]{Guralnick:1998}} \label{theorem:GH}
Let $S$ be a finite group, and let $V$ be an irreducible $kS$-module on which $S$ acts faithfully. Then 
\[
\dim \opH^1(S,V)\leq \tfrac{1}{2}\dim V.
\]
\end{theorem}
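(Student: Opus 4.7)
My approach is to convert the cohomological statement into a counting problem about complements in the semidirect product $E := V \rtimes S$. First I make some elementary reductions: if $V$ is the trivial module then faithfulness forces $S = 1$ and there is nothing to prove, so I may assume $V$ is nontrivial and hence $V^S = 0$ by irreducibility. Enlarging $k$ does not affect the dimensions involved, so I may also assume $k = \F_q$ is a finite field; the desired inequality $\dim \opH^1(S,V) \leq \tfrac{1}{2}\dim V$ then becomes the cardinality bound $|\opH^1(S,V)|^2 \leq |V|$.

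Next I invoke the standard dictionary: the map $z \mapsto C_z := \{(z(s),s) : s \in S\}$ identifies $Z^1(S,V)$ with the set of complements to $V$ in $E$, and two complements are $V$-conjugate precisely when the associated cocycles differ by a coboundary. Because $V^S = 0$, the conjugation action of $V$ on any fixed complement is free, so each $V$-orbit on the set of complements has size exactly $|V|$, whence
\[
\#\{\text{complements to $V$ in $E$}\} \;=\; |\opH^1(S,V)| \cdot |V|.
\]
Also, since $V^S=0$, the coboundary map $V\to Z^1(S,V)$ is injective, and $\dim Z^1(S,V) = \dim\opH^1(S,V)+\dim V$. The theorem is therefore equivalent to the combinatorial bound that the number of complements to $V$ in $E$ is at most $|V|^{3/2}$.

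The heart of the proof, and the main obstacle, is this $|V|^{3/2}$ bound on complements. A naive encoding of each complement by its cocycle values on a generating set yields only the much weaker estimate $|V|^{d(S)}$, where $d(S)$ is the minimal number of generators of $S$. To extract the factor of one half one must exploit a symmetry on the set of complements itself: the natural strategy is to fix a reference complement $C_0\cong S$ and, for each other complement $C$, associate the intersection $C\cap C_0$ together with the corresponding difference cocycle, and then count ordered pairs of complements in two ways to produce a self-dual estimate controlled by $|V|^{1/2}$. Faithful irreducibility enters critically here: without faithfulness some element of $S$ would act trivially on $V$, producing extra complements uncontrolled by the module structure, while without irreducibility a proper $kS$-submodule would provide an obstructing direct summand that destroys the symmetry. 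Once the $|V|^{3/2}$ bound is obtained, taking logarithms base $q$ immediately yields $\dim \opH^1(S,V) \leq \tfrac{1}{2} \dim V$.
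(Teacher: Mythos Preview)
The paper does not prove this theorem at all; it is simply quoted from \cite{Guralnick:1998} and then \emph{applied} (for instance, in the alternate argument following Corollary~\ref{corollary:H1halfbound}). There is therefore no ``paper's own proof'' to compare your proposal against.

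Judged on its own, your proposal correctly carries out the standard preliminary translation: the reduction to a finite field, the identification of $Z^1(S,V)$ with complements to $V$ in $V\rtimes S$, and the reformulation of the inequality as the bound $\#\{\text{complements}\}\le |V|^{3/2}$ are all fine. But you stop precisely where the content begins. The paragraph you call ``the heart of the proof'' is not a proof: phrases like ``count ordered pairs of complements in two ways to produce a self-dual estimate controlled by $|V|^{1/2}$'' and ``irreducibility \ldots\ destroys the symmetry'' do not describe any concrete computation, and nothing you wrote explains where the exponent $3/2$ actually comes from. The genuine Guralnick--Hoffman argument requires an ingredient you never invoke---a reduction to groups with a unique minimal normal subgroup, and then, in the nonabelian case, the $3/2$-generation property of finite simple groups (every nonidentity element lies in a generating pair), which is what forces distinct complements to have small intersection and yields the square-root saving. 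As written, your proposal is a correct setup followed by a placeholder for the main step.
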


In the case $m=2$, the cohomology group $\opH^2(S,V)$ parametrizes non-equivalent group extensions of $V$ by $S$, and has connections with the study of profinite presentations. Guralnick, Kantor, Kassabov, and Lubotsky verified an earlier conjecture of Holt by proving the following theorem: 

\begin{theorem} \textup{\cite[Theorem B]{Guralnick:2007}} \label{gkkl} 
Let $S$ be a finite quasi-simple group, and let $V$ be a $kS$-module. Then 
\[
\dim \opH^2(S,V)\leq (17.5) \dim V.
\]
\end{theorem}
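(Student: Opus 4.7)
The plan is to exploit the tight connection between $\opH^2(S,V)$ and the minimum number of relations needed to present $S$ as a profinite group. If $1 \to R \to F \to S \to 1$ is a profinite presentation with $F$ a free profinite group of rank $d$, then the Lyndon--Hochschild--Serre five-term exact sequence applied to this extension yields
\[
\dim \opH^2(S,V) - \dim \opH^1(S,V) \;=\; \dim \Hom_S(R/[F,R],\,V)\;-\;d\cdot \dim V.
\]
If $R$ is normally generated in $F$ by $r$ elements, then $R/[F,R]$ is generated as a continuous $S$-module by the images of those elements, so $\dim \Hom_S(R/[F,R],V) \leq r\cdot \dim V$, and hence
\[
\dim \opH^2(S,V)\;\leq\;(r-d)\cdot \dim V + \dim \opH^1(S,V).
\]

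First I would reduce to the case where $V$ is irreducible and $S$ acts faithfully: a bound of the shape $\dim \opH^2(S,W) \leq c\cdot \dim W$ is additive along short exact sequences in the coefficient module, and kernels of actions are handled by inflation from quotients. Applying Theorem~\ref{theorem:GH} then controls the $\opH^1$-term by $\tfrac{1}{2}\dim V$, so the theorem reduces to producing a universal constant $C$ with the property that every finite quasi-simple group $S$ admits a profinite presentation satisfying $r - d + \tfrac{1}{2} \leq C$. Since finite quasi-simple groups are $2$-generated, one takes $d = 2$, and the task becomes showing that every finite quasi-simple $S$ has a profinite presentation with at most roughly $19$ relators in order to reach $C = 17.5$.

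The hard part is the construction of such uniformly short profinite presentations for every finite quasi-simple group, which is where the classification of finite simple groups enters in an essential way. The alternating groups and the $26$ sporadic groups are handled by explicit constructions and direct verification, exploiting that they form a bounded or finite list. The bulk of the work lies with the finite groups of Lie type, for which one builds short Curtis--Steinberg--Tits style presentations whose number of relators is bounded \emph{independently} of the Lie rank and of the size of the underlying field; low-rank and small-characteristic exceptions require separate treatment to maintain the uniform bound. One then passes from a finite simple group to its quasi-simple covers by a controlled analysis of the Schur multiplier, which is itself uniformly bounded across the families. The explicit numerical constant $17.5$ is not forced by any single case but emerges as the outcome of optimizing relator counts across all of these families when substituted into the displayed inequality above.
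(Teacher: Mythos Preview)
This theorem is not proved in the present paper at all: it is quoted verbatim from \cite[Theorem~B]{Guralnick:2007} as background for the linear boundedness question, and the paper supplies no argument of its own for it. So there is no ``paper's own proof'' to compare against; your proposal is really a sketch of the proof strategy in the cited Guralnick--Kantor--Kassabov--Lubotzky paper rather than of anything appearing here.

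As a sketch of that external argument, your outline captures the essential architecture correctly: the cohomological inequality coming from a profinite presentation $1\to R\to F\to S\to 1$ with $F$ free of rank $d$, the passage from a normal generating set of $R$ of size $r$ to the bound $(r-d)\dim V$ on $\dim\opH^2-\dim\opH^1$, and then the construction, via the classification, of uniformly short profinite presentations for all finite quasi-simple groups. Two small points are worth tightening. First, the relation module that appears in the five-term sequence is $R^{\mathrm{ab}}=R/[R,R]$ with its induced $S$-action, not $R/[F,R]$; the latter is the $S$-coinvariants of $R^{\mathrm{ab}}$. The bound $\dim\Hom_S(R^{\mathrm{ab}},V)\le r\cdot\dim V$ still follows because $r$ normal generators of $R$ in $F$ yield $r$ generators of $R^{\mathrm{ab}}$ as an $S$-module. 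Second, your reduction ``kernels of actions are handled by inflation from quotients'' is too quick for $\opH^2$: if a central subgroup $K$ acts trivially on $V$, the Lyndon--Hochschild--Serre spectral sequence contributes terms beyond $\opH^2(S/K,V)$, so one cannot simply pass to a faithful quotient before invoking Theorem~\ref{theorem:GH}. In \cite{Guralnick:2007} this is handled by working directly with the presentation inequality for $S$ itself and treating the $\opH^1$ term and the trivial-module case separately, rather than by first reducing to a faithful action.
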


Guralnick et al.\ also showed that if $S$ is an arbitrary finite group and if $V$ is an irreducible $kS$-module that is faithful for $S$, then $\dim \opH^2(S,V) \leq (18.5) \dim V$ \cite[Theorem C]{Guralnick:2007}, but that in general no analogue of this result can hold for $\opH^m(S,V)$ when $m \geq 3$ \cite[Theorem G]{Guralnick:2007}. Still, their work leaves open the possibility of finding constants $C(m)$ for each $m \geq 3$ such that $\dim \opH^m(S,V) \leq C(m) \cdot \dim V$ when $S$ is restricted to a suitable collection of finite groups. If such constants exist, we say that the cohomology groups $\opH^m(S,V)$, for $S$ in the specified collection of finite groups and $V$ in the specified collection of $kS$-modules, are \emph{linearly bounded}. We call the existence of such constants the \emph{linear boundedness question} for the given groups and modules. More generally, we can consider the linear boundedness question for collections of algebraic groups and for accompanying collections of rational modules.

\subsection{}

This paper investigates the linear boundedness question for the rational cohomology of a simple simply-connected algebraic group $G$ over $k$ that is defined and split over $\Fp$. In other words, we investigate, for $m \leq 3$, upper bounds on the dimension of $\opH^m(G,M)$ for $M$ a rational $G$-module (which the reader may assume to always be finite-dimensional, though we do not always make this assumption explicit, nor is it necessary for the validity of every result in this paper). In this context we are able to exploit the existence of a Borel subgroup $B$ in $G$ (i.e., a maximal closed connected solvable subgroup in $G$) and a maximal torus $T$ in $B$. In Section \ref{section:bounds} we apply intricate calculations of Bendel, Nakano, and Pillen \cite{Bendel:2007}, Wright \cite{Wright:2011}, and Andersen and Rian \cite{Andersen:2007}, summarized in Theorem \ref{theorem:BNPWAR}, to prove for a finite-dimensional rational $B$-module $M$ that
\begin{equation} 
\dim \opH^{m}(B,M) \leq 
\begin{cases} 
\dim M & \text{if $m=1$ or $2$,} \\
2 \dim M   & \text{if $m=3$ and $p>h$.}  
\end{cases} 
\end{equation}
Here $h$ is the Coxeter number for $\Phi$. For a rational $G$-module $M$, it is well known that $\opH^{m}(G,M) \cong \opH^{m}(B,M)$, so the above inequalities yield general bounds on the dimensions of rational cohomology groups for $G$  when $m$ equals $1$, $2$, or $3$. Further refinements are given in the case $m=1$ through the explicit computation of $\opH^{1}(B,\mu)$ for $\mu$ a one-dimensional $B$-module. 

If $M$ is a rational $T$-module, then $M$ admits a weight space decomposition $M = \bigoplus_{\lambda \in X(T)} M_\lambda$. Here $X(T)$ is the character group of $T$. In particular, let $M$ be a rational $G$-module. Using the weight space decomposition of $M$, in Section \ref{section:boundsweightspaces} we establish bounds on the dimension of $ \opH^{1}(G,M)$ in terms of the dimensions of the weight spaces of $M$. This new idea gives much finer information than previous bounds depending only on the dimension of $M$, since it enables us to produce formulas in terms of the differences of dimensions of weight spaces. For example, given knowledge about the weight space decomposition of $M$, one can use these formulas in various situations to prove the vanishing of cohomology groups. More generally, when the dimension of $M$ is relatively small, our bounds provide much more effective estimates on the dimension of $\opH^1(G,M)$ than the estimates that arise through the methods of Parshall and Scott \cite{Parshall:2011a} or of Parker and Stewart \cite{Parker:2012}.

As an application of our techniques, in Section \ref{section:applications} we show how to obtain effective bounds on the dimension of first cohomology groups for $S=\Sigma_{d}$, the symmetric group on $d$ letters. We also demonstrate for $S=\Gfq$, with $q = p^r$ and $r$ sufficiently large, that
\begin{equation}
\dim \opH^{m}(\Gfq,V) \leq 
\begin{cases}
\frac{1}{h} \dim V & \text{if $m=1$,} \\
\dim V & \text{if $m=2$,} \\ 
2\dim V & \text{if $m=3$ and $p>h$}
\end{cases}
\end{equation} 
for each $k\Gfq$-module $V$. Our results for $m=2$ indicate that the bound given in Theorem \ref{gkkl} can be significantly improved when $S$ is a finite Chevalley group. We do not treat the twisted finite groups of Lie type in this paper, but invite the reader to consider how our results could be extended to those cases.

\subsection{Notation} \label{subsection:notation}

We generally follow the notation and terminology of \cite{Jantzen:2003}. Let $k$ be an algebraic\-ally closed field of characteristic $p > 0$. Let $G$ be a simple simply-connected algebraic group scheme over $k$, defined and split over $\Fp$, and let $F: G \rightarrow G$ be the standard Frobenius morphism on $G$. For $r \geq 1$ and $q = p^r$, denote by $G_r$ the $r$-th Frobenius kernel of $G$, and by $\Gfq$ the finite subgroup of $\Fq$-rational points in $G$, consisting of the fixed-points in $G(k)$ of $F^r$. Then $\Gfq$ is the universal version of an untwisted finite group of Lie type, as defined in \cite[2.2]{Gorenstein:1998}.

Let $T \subset G$ be a maximal torus, which we assume to be defined and split over $\Fp$. Let $\Phi$ be the set of roots of $T$ in $G$, and let $h$ be the Coxeter number of $\Phi$. Then $\Phi$ is an indecomposable root system. Fix a set of simple roots $\Delta \subset \Phi$, and denote the corresponding sets of positive and negative roots in $\Phi$ by $\Phi^+$ and $\Phi^-$, respectively. Write $W = N_G(T)/T$ for the Weyl group of $\Phi$. Let $B=T\cdot U \subset G$ be a Borel subgroup containing $T$, with unipotent radical $U$ corresponding to $\Phi^-$. Set $\Bfq = B \cap \Gfq$, $\Ufq = U \cap \Gfq$, and $\Tfq = T \cap \Gfq$. Similarly, set $B_r = B \cap G_r$, $U_r = U \cap G_r$, and $T_r = T \cap G_r$. We write $\g = \Lie(G)$ and $\fu = \Lie(U)$ for the Lie algebras of $G$ and $U$, respectively. Then $\g$ and $\fu$ are naturally restricted Lie algebras. We write $u(\g)$ for the restricted enveloping algebra of $\g$, and $\Ug$ for the ordinary universal enveloping algebra of $\g$.

Let $X(T)$ be the character group of $T$. Write 
\[
X(T)_+ = \set{ \lambda \in X(T): (\lambda,\alpha^\vee) \geq 0 \text{ for all } \alpha \in \Delta}
\]
for the set of dominant weights in $X(T)$, and for $r \geq 1$, write
\[
X_r(T) = \set{\lambda \in X(T)_+: (\lambda,\alpha^\vee) < p^r \text{ for all } \alpha \in \Delta}
\]
for the set of $p^r$-restricted dominant weights in $X(T)$. For each $\lambda\in X(T)_{+}$, let $H^0(\lambda)=\text{ind}_{B}^{G}(\lambda)$ be the corresponding induced module, which has irreducible socle $\soc_G H^0(\lambda) = L(\lambda)$. Each irreducible rational $G$-module is isomorphic to $L(\lambda)$ for some $\lambda \in X(T)_+$. Since $G$ is assumed to be simply-connected, the $L(\lambda)$ for $\lambda \in X_r(T)$ form a complete set of pairwise nonisomorphic irreducible $k\Gfq$-modules.

\subsection{Acknowledgements}

The authors thank the American Institute of Mathematics for hosting the workshops ``Cohomology and representation theory for finite groups of Lie type'' in June 2007, and ``Cohomology bounds and growth rates'' in June 2012. Many of the results described in Section \ref{subsection:AIM2007} were motivated by the ideas exchanged at the 2007 workshop, and provided impetus for the organization of the 2012 meeting. The results of this paper were obtained in the AIM working group format at the 2012 workshop, which promoted a productive exchange of ideas between the authors at the meeting.

The fourth author (Nakano) presented talks at the U.C.\ Lie Theory Workshops at U.C.\ Santa Cruz in 1999 and at Louisiana State University (LSU) in 2011. At the LSU meeting, his lecture was devoted to explaining the various connections between the cohomology theories for reductive algebraic groups and their associated Frobenius kernels and finite Chevalley groups. The results in this paper are a natural extension of the results discussed in his presentation. 

\section{Bounds on rational cohomology groups} \label{section:bounds}

\subsection{Weight spaces and \texorpdfstring{$B$}{B}-cohomology} \label{subsection:Bcohomology}

The irreducible $B$-modules are one-dimensional and are identified with elements of $X(T)$ via inflation from $T$ to $B$. For a finite-dimensional rational $B$-module $M$, the $B$-module composition factors of $M$ can be read off with multiplicities from its weight space decomposition. By considering the long exact sequence in cohomology, it follows for each $m \geq 0$ that one has the inequality
\begin{equation}\label{eq:Bbound} \textstyle
\dim \opH^m(B,M) \leq \sum_{\mu \in X(T)} \dim M_\mu \cdot \dim \opH^m(B,\mu).
\end{equation}
Thus, if one can determine a bound on the dimension of $\opH^m(B,\mu)$ that depends only on $m$ and not on $\mu$, then one can obtain a similar bound on the dimension of $\opH^m(B,M)$ that depends only on $m$ and the dimension of $M$. In particular, if $M$ is a rational $G$-module considered also as a rational $B$-module by restriction, then one has $\Hbul(G,M) \cong \Hbul(B,M)$ by \cite[II.4.7]{Jantzen:2003}, so a bound on the dimension of $\opH^m(B,M)$ automatically yields a bound on the dimension of $\opH^m(G,M)$.

For $m=1$, we can use Andersen's explicit computation of $\opH^{1}(B,\mu)$ for each $\mu \in X(T)$, together with the formula \eqref{eq:Bbound}, to give a general upper bound on $\dim \opH^{1}(B,M)$. 


\begin{theorem}\textup{\cite[Corollary 2.4]{Andersen:1984a}}
Let $\mu \in X(T)$. Then
\[
\opH^1(B,\mu) \cong \begin{cases} k & \text{if }\mu = -p^t \alpha \text{ for some } \alpha \in \Delta \text{ and } t \geq 0, \\ 0 & \text{otherwise.} \end{cases}
\]
\end{theorem}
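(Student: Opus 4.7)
The plan is to apply the Lyndon--Hochschild--Serre spectral sequence for the semidirect product $B = T \ltimes U$ to reduce the problem to a computation of $\opH^1(U,k)$ as a $T$-module. Since $T$ is a torus and hence linearly reductive (every rational $T$-module is a direct sum of characters, each of which is injective as a $T$-module), one has $\opH^i(T,-) = 0$ for all $i>0$ on rational modules, so the spectral sequence collapses to yield
\[
\opH^n(B,\mu) \;\cong\; \opH^n(U,\mu)^T \;\cong\; \bigl(\opH^n(U,k)\otimes\mu\bigr)^T \;\cong\; \opH^n(U,k)_{-\mu}
\]
for every $n \geq 0$ and $\mu \in X(T)$; the middle isomorphism uses that $U$ acts trivially on the inflated module $\mu$, and the last identifies $T$-invariants in the tensor with the $(-\mu)$-weight space. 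It therefore suffices to show that the nonzero $T$-weight spaces of $\opH^1(U,k)$ are precisely the one-dimensional spaces at the weights $p^t\alpha$ with $\alpha \in \Delta$ and $t \geq 0$.

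Next I would identify $\opH^1(U,k) = \Ext^{1}_U(k,k)$ with the group $\Hom_{\mathrm{alg\,gp}}(U,\G_a)$ of morphisms of algebraic groups from $U$ to $\G_a$: a nontrivial self-extension of $k$ by $k$ corresponds to a $U$-action by unipotent $2 \times 2$ matrices, equivalently a morphism to the additive group. Since $\G_a$ is abelian, any such morphism factors through the commutator quotient $U^{\mathrm{ab}} = U/[U,U]$. The Chevalley commutator formula identifies $[U,U]$ with the closed subgroup generated by the root subgroups $U_{-\gamma}$ for $\gamma \in \Phi^{+} \setminus \Delta$, so that as a $T$-equivariant algebraic group
\[
U^{\mathrm{ab}} \;\cong\; \prod_{\alpha \in \Delta} U_{-\alpha},
\]
with $T$ acting on the factor $U_{-\alpha} \cong \G_a$ through the character $-\alpha$. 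The final input is the classical description of $\Hom_{\mathrm{alg\,gp}}(\G_a,\G_a)$ in characteristic $p$: every such morphism is an additive $p$-polynomial, with $k$-basis $\{x \mapsto x^{p^t}\}_{t \geq 0}$, and on a source $\G_a$ of $T$-weight $-\alpha$ the $t$-th basis element has $T$-weight $p^t\alpha$. Combining these gives
\[
\opH^1(U,k) \;\cong\; \bigoplus_{\alpha \in \Delta}\bigoplus_{t \geq 0} k_{p^t\alpha},
\]
which together with the spectral sequence reduction yields the theorem.

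The main obstacle will be the identification of $[U,U]$ with the closed subgroup generated by the non-simple negative root subgroups. One containment is immediate from the Chevalley commutator formula, but the reverse containment (that every $U_{-\gamma}$ with $\hght(\gamma) \geq 2$ actually lies in $[U,U]$) requires checking that the relevant structure constants are nonzero in $k$; this deserves some care when $p$ is very small (e.g.\ $p = 2$ for types $B$, $C$, $F_4$, or $p = 3$ for $G_2$), where one may need to exploit the filtration by height and induct on $\hght(\gamma)$, combining several commutators to produce the required element in the small-characteristic cases. Once that group-theoretic input is secured, the remainder of the argument is purely formal manipulation of weight spaces.
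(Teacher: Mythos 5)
The paper offers no proof of this statement---it is quoted directly from Andersen \cite[Corollary 2.4]{Andersen:1984a}---so your argument can only be judged on its own merits. The route you take (collapse the Lyndon--Hochschild--Serre spectral sequence for $B = T\ltimes U$ using that $T$ is linearly reductive, identify $\opH^1(B,\mu)$ with the $(-\mu)$-weight space of $\opH^1(U,k)\cong\Hom_{\mathrm{alg\,gp}}(U,\G_a)$, and then compute the additive characters of $U$) is the standard one and is essentially how this result is established in the literature. Your weight bookkeeping is correct: the character $x_{-\gamma}(c)\mapsto c^{p^t}$ has $T$-weight $p^t\gamma$, and since no positive root is a $p$-power multiple of another, each weight space of $\Hom(U,\G_a)$ is at most one-dimensional; the existence of the characters at the weights $p^t\alpha$, $\alpha\in\Delta$, follows from projecting onto $U/\prod_{\hght\gamma\geq 2}U_{-\gamma}$, which is visibly an abelian quotient.

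The one place where your write-up is a sketch rather than a proof is exactly where the content of the theorem lies: showing that no additive character is supported on a non-simple root subgroup, i.e.\ that $U_{-\gamma}\subseteq[U,U]$ for every $\gamma\in\Phi^+$ with $\hght(\gamma)\geq 2$, in \emph{all} characteristics. You correctly flag this, but be aware that checking ``the relevant structure constants are nonzero'' is subtler than looking at the constants $N_{\alpha\beta}$: for instance in type $C_2$ with $p=2$ the only two-term decomposition of $2\alpha+\beta$ has $N_{\alpha,\alpha+\beta}=\pm 2\equiv 0$, and the root subgroups $U_{-\alpha}$ and $U_{-(\alpha+\beta)}$ actually commute. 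The clean way to close this is: (i) $[U,U]$ is a closed connected $T$-stable subgroup of $U$ normalized by $T$, hence is directly spanned by the root subgroups it contains; (ii) it therefore suffices to exhibit, for each non-simple $\gamma$, a single commutator $[x_{-\alpha}(t),x_{-\beta}(u)]$ whose $U_{-\gamma}$-component is nonzero, and here one must use the full constants $C_{ij}$ of the Chevalley commutator formula (e.g.\ in $C_2$ the coefficient of $x_{-(2\alpha+\beta)}(t^2u)$ in $[x_{-\alpha}(t),x_{-\beta}(u)]$ is $N_{\alpha\beta}N_{\alpha,\alpha+\beta}/2=\pm 1$, a unit even when $p=2$). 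With that input supplied---a finite check in the doubly- and triply-laced types for $p=2,3$---your argument is complete.
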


\begin{corollary} \label{corollary:H1Bbound}
Let $M$ be a finite-dimensional rational $B$-module. Then
\[
\dim \opH^1(B,M) \leq \sum_{\alpha \in \Delta, t \geq 0} \dim M_{-p^t\alpha} \leq \dim M.
\]
In particular, if $M$ is a finite-dimensional rational $G$-module, then
\[
\dim \opH^1(G,M) \leq \sum_{\alpha \in \Delta,\; t \geq 0} \dim M_{-p^t\alpha}.
\]
\end{corollary}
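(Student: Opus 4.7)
The proof is essentially a direct combination of the Andersen computation with the bound \eqref{eq:Bbound}. The plan is to first specialize \eqref{eq:Bbound} at $m = 1$, substitute the value of $\dim \opH^1(B,\mu)$ furnished by the preceding theorem (which is $1$ when $\mu = -p^t\alpha$ for some simple root $\alpha$ and some integer $t \geq 0$, and $0$ otherwise), and read off that
\[
\dim \opH^1(B,M) \;\leq\; \sum_{\mu \in X(T)} \dim M_\mu \cdot \dim \opH^1(B,\mu) \;=\; \sum_{\alpha \in \Delta,\, t \geq 0} \dim M_{-p^t\alpha}.
\]
This yields the first inequality immediately.

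For the second inequality $\sum_{\alpha \in \Delta,\, t \geq 0} \dim M_{-p^t\alpha} \leq \dim M$, the key observation I would verify is that the weights $-p^t\alpha$, as $(\alpha,t)$ ranges over $\Delta \times \Z_{\geq 0}$, are pairwise distinct. Indeed, if $-p^t\alpha = -p^s\beta$ with $\alpha,\beta \in \Delta$, then the linear independence of the simple roots in $X(T) \otimes_\Z \Q$ forces $\alpha = \beta$, and then $p^t = p^s$ forces $t = s$. Hence the weight spaces $M_{-p^t\alpha}$ indexed by these pairs are distinct summands in the weight space decomposition $M = \bigoplus_{\mu \in X(T)} M_\mu$, so the sum of their dimensions is bounded by $\dim M$ (the sum is automatically finite since $M$ is finite-dimensional, so only finitely many pairs $(\alpha, t)$ contribute).

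Finally, the ``In particular'' statement is obtained by appealing to the isomorphism $\Hbul(G,M) \cong \Hbul(B,M)$ from \cite[II.4.7]{Jantzen:2003}, already noted in the discussion preceding the theorem, and applying the $B$-cohomology bound just established to the restriction of $M$ to $B$. No genuine obstacle arises: the only point requiring any care is the verification that the pairs $(\alpha,t)$ index distinct weight spaces, which is a one-line argument using linear independence of simple roots.
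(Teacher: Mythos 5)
Your proposal is correct and follows exactly the route the paper intends: specialize \eqref{eq:Bbound} to $m=1$, plug in Andersen's computation of $\opH^1(B,\mu)$, observe that the weights $-p^t\alpha$ are pairwise distinct so the corresponding weight spaces contribute at most $\dim M$ in total, and pass from $B$ to $G$ via $\Hbul(G,M)\cong\Hbul(B,M)$. No issues.
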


\subsection{Applications to \texorpdfstring{$G$}{G}-cohomology}
\label{subsection:Gcohomology}

We can now apply the results of the preceding section to give bounds on the dimension of $\opH^{1}(G,M)$ for $M$ a rational $G$-module, by considering the action of the Weyl group on the set of weights of $M$.

\begin{theorem} \label{theorem:oneoverh}
Let $M$ be a finite-dimensional rational $G$-module. Then
\[
\dim \opH^1(G,M) \leq \tfrac{1}{h} \dim M.
\]
\end{theorem}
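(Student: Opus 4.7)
The plan is to start from the bound in Corollary \ref{corollary:H1Bbound}, namely
\[
\dim \opH^1(G,M) \leq \sum_{\alpha \in \Delta,\; t \geq 0} \dim M_{-p^t\alpha},
\]
and exploit the extra symmetry coming from the fact that $M$ is a $G$-module (not merely a $B$-module) in order to replace the sum over simple roots with a sum over all roots, at the cost of a factor $1/h$. The total then gets bounded by $\dim M$ because the weights involved are distinct.

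Since $M$ is a rational $G$-module, the normalizer $N_G(T)$ permutes the $T$-weight spaces dimension-preservingly, and so $\dim M_\lambda = \dim M_{w\lambda}$ for every $w \in W$ and $\lambda \in X(T)$. Hence for each fixed $t \geq 0$ and each $W$-orbit $\mathcal{O} \subset \Phi$, all weight spaces $M_{-p^t\beta}$ with $\beta \in \mathcal{O}$ share a common dimension, giving
\[
\sum_{\alpha \in \mathcal{O} \cap \Delta} \dim M_{-p^t\alpha} = \frac{|\mathcal{O} \cap \Delta|}{|\mathcal{O}|} \sum_{\beta \in \mathcal{O}} \dim M_{-p^t\beta}.
\]
The combinatorial key is then the claim that $|\mathcal{O} \cap \Delta|/|\mathcal{O}| = 1/h$ for each $W$-orbit of roots. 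In the simply-laced case there is a single orbit ($\mathcal{O} = \Phi$) and the identity is the familiar equality $|\Phi| = h \cdot \rank(\Phi)$. In each non-simply-laced type ($B_n$, $C_n$, $F_4$, $G_2$) there are two length-orbits, and a direct check from the standard root data shows the ratio equals $1/h$ on each. Summing over orbits yields, for every $t \geq 0$,
\[
\sum_{\alpha \in \Delta} \dim M_{-p^t\alpha} = \frac{1}{h} \sum_{\beta \in \Phi} \dim M_{-p^t\beta}.
\]

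Finally, roots are indivisible in the root lattice and $p \geq 2$, so the weights $-p^t\beta$ for $t \geq 0$ and $\beta \in \Phi$ are pairwise distinct nonzero elements of $X(T)$. Therefore
\[
\sum_{t \geq 0} \sum_{\beta \in \Phi} \dim M_{-p^t\beta} \leq \sum_{\mu \in X(T)} \dim M_\mu = \dim M,
\]
and combining with Corollary \ref{corollary:H1Bbound} and the orbit-summation identity above gives $\dim \opH^1(G,M) \leq \tfrac{1}{h} \dim M$.

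The only non-routine step is the orbit-wise identity $|\mathcal{O} \cap \Delta| = |\mathcal{O}|/h$; a conceptual proof (via the action of a Coxeter element on each length-orbit) would be nicest, but a small case-by-case table for the non-simply-laced types is more than sufficient. Everything else is a formal manipulation of \eqref{eq:Bbound} together with $W$-invariance of weight multiplicities.
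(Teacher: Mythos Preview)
Your argument is correct and follows essentially the same route as the paper: both start from Corollary~\ref{corollary:H1Bbound}, invoke $W$-invariance of weight multiplicities, and use the length-orbit identity $|\Phi_s| = h\,|\Delta_s|$, $|\Phi_l| = h\,|\Delta_l|$ (your $|\mathcal{O}\cap\Delta| = |\mathcal{O}|/h$) to pass from a sum over $\Delta$ to one over $\Phi$. Your write-up is slightly more explicit about why the resulting sum is bounded by $\dim M$; the cleanest justification there is not ``roots are indivisible in the root lattice'' but simply that $\Phi$ is reduced, so no root is a $p^{s}$-multiple of another for $s\geq 1$.
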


\begin{proof}
Recall that restriction from $G$ to $B$ induces an isomorphism $\Hbul(G,M) \cong \Hbul(B,M)$. Then
\[ \textstyle
\dim \opH^1(G,M) \leq \sum_{\alpha \in \Delta,\; t \geq 0} \dim M_{-p^t\alpha}
\]
by Corollary \ref{corollary:H1Bbound}. Next, the set of weights of a rational $G$-module is invariant under the action of the ambient Weyl group $W$, and all roots in $\Phi$ of a given root length lie in a single $W$-orbit. In particular, if $t \geq 0$, and if $\alpha,\beta \in \Delta$ are of the same length, then $\dim M_{-p^t\alpha} = \dim M_{-p^t \beta}$. Let $\Phi_s$ (resp.\ $\Phi_l$) denote the set of short (resp.\ long) roots in $\Phi$, and set $\Delta_s = \Delta \cap \Phi_s$ (resp.\ $\Delta_l = \Delta \cap \Phi_l$). Then one can check that $\abs{\Phi_s} = h \cdot \abs{\Delta_s}$ and $\abs{\Phi_l} = h \cdot \abs{\Delta_l}$; cf.\ \cite[Proposition 3.18]{Humphreys:1990} for the case of one root length. Together these equalities imply that $\sum_{\alpha \in \Delta,\; t \geq 0} \dim M_{-p^t\alpha} \leq \frac{1}{h} \dim M$.
\end{proof}

\begin{corollary} \label{corollary:H1halfbound}
Let $M$ be a finite-dimensional rational $G$-module. Then
\[
\dim \opH^1(G,M) \leq \tfrac{1}{2} \dim M.
\]
\end{corollary}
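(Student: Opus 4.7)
The plan is to derive this corollary as an immediate consequence of Theorem \ref{theorem:oneoverh}. That theorem already supplies the stronger bound $\dim \opH^1(G,M) \leq \tfrac{1}{h} \dim M$, so the only thing required is to observe that $\tfrac{1}{h} \leq \tfrac{1}{2}$, i.e., that the Coxeter number $h$ of the indecomposable root system $\Phi$ attached to $G$ always satisfies $h \geq 2$.

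To justify $h \geq 2$, I would appeal to the classification of indecomposable root systems. A quick tabulation gives $h = n+1$ in type $A_n$ (with $n \geq 1$), $h = 2n$ in types $B_n$ and $C_n$, $h = 2n-2$ in type $D_n$ (with $n \geq 4$), and the fixed values $h = 12, 18, 30, 12, 6$ in types $E_6, E_7, E_8, F_4, G_2$ respectively; in every case $h \geq 2$, with equality precisely in type $A_1$. Alternatively, one can avoid case-checking and simply note that $G$ being simple forces $\abs{\Phi} \geq 2$, whence $h = \abs{\Phi}/\abs{\Delta} \geq 2$ (using the identity $h \cdot \abs{\Delta} = \abs{\Phi}$ when $\Phi$ has a single root length, and combining $\abs{\Phi_s} = h \cdot \abs{\Delta_s}$ with $\abs{\Phi_l} = h \cdot \abs{\Delta_l}$ from the proof of Theorem \ref{theorem:oneoverh} otherwise).

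Combining these observations, the chain of inequalities
\[
\dim \opH^1(G,M) \leq \tfrac{1}{h} \dim M \leq \tfrac{1}{2} \dim M
\]
completes the argument. There is no real obstacle here; the corollary is genuinely a one-line consequence of Theorem \ref{theorem:oneoverh}, included presumably to make explicit the analogue for rational $G$-cohomology of the Guralnick--Hoffman bound (Theorem \ref{theorem:GH}) for finite groups.
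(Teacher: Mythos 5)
Your proof is correct and matches the paper's own argument, which simply cites Theorem \ref{theorem:oneoverh} and the fact that the Coxeter number is always at least $2$. (One tiny quibble: in your classification-free justification, $\abs{\Phi}\geq 2$ alone does not give $\abs{\Phi}/\abs{\Delta}\geq 2$; you want $\abs{\Phi}\geq 2\abs{\Delta}$, which holds since $\Phi$ contains $\pm\alpha$ for each $\alpha\in\Delta$.)
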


\begin{proof}
This follows from Theorem \ref{theorem:oneoverh} since the Coxeter number is always at least $2$.
\end{proof}

If $M = L(\lambda)$ is an irreducible rational $G$-module, then we can use Theorem \ref{theorem:GH} to give an alternate proof of Corollary \ref{corollary:H1halfbound}, as follows. To begin, we may assume by the Linkage Principle that $\lambda \in \Z\Phi$, and also that $\lambda \neq 0$, since $L(0) = k$ and $\opH^1(G,k) = 0$ \cite[II.4.11]{Jantzen:2003}. Next, choose $r > 1$ such that $\lambda \in X_r(T)$, and set $q = p^r$. Then $L(\lambda)$ is an irreducible $k\Gfq$-module, and the restriction map $\opH^1(G,L(\lambda)) \rightarrow \opH^1(\Gfq,L(\lambda))$ is injective by \cite[7.4]{Cline:1977a}. Now write $Z(\Gfq)$ for the center of $\Gfq$. Since $G$ is simply-connected (i.e., is of universal type), $G'(\Fq) :=\Gfq/Z(\Gfq)$ is a nonabelian finite simple group by \cite[2.2.6--2.2.7]{Gorenstein:1998}; this uses the fact that $r> 1$. Also, $Z(\Gfq)$ is a subgroup of $Z(G)$ by \cite[2.5.9]{Gorenstein:1998}. Since $Z(G) = \bigcap_{\alpha \in \Phi} \ker(\alpha) \subset T$ acts trivially on $L(\lambda)$ whenever $\lambda \in \Z\Phi$, it follows that $L(\lambda)$ is naturally a nontrivial irreducible module for $G'(\Fq)$. In particular, $G'(\Fq)$ must act faithfully on $L(\lambda)$. The group $Z(\Gfq)$ has order prime to $p$, so it follows from considering the Lyndon--Hochschild--Serre spectral sequence for the group extension $1 \rightarrow Z(\Gfq) \rightarrow \Gfq \rightarrow G'(\Fq) \rightarrow 1$ that $\opH^1(\Gfq,L(\lambda)) \cong \opH^1(G'(\Fq),L(\lambda))$. Then $\dim \opH^1(\Gfq,L(\lambda)) \leq \frac{1}{2} \dim L(\lambda)$ by \cite[Theorem 1]{Guralnick:1998}.

\subsection{Bounds for second and third cohomology groups} \label{subsection:boundssecondthird}

Now we consider $\opH^{m}(B,\mu)$, $\opH^m(B,M)$, and $\opH^m(G,M)$ for $m$ equal to $2$ or $3$. First recall the following results:

\begin{theorem} \label{theorem:BNPWAR}
Let $\mu \in X(T)$. Then
\begin{itemize} 
\item[\textup{(a)}] $\dim \opH^2(B,\mu) \leq 1$.
\item[\textup{(b)}] If $p > h$, then $\dim \opH^3(B,\mu) \leq 2$.
\end{itemize} 
\end{theorem}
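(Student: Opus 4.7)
The main reduction is standard. The Lyndon--Hochschild--Serre spectral sequence for the semidirect product decomposition $B = T\ltimes U$, combined with the linear reductivity of $T$ on rational modules, collapses to an isomorphism
\[
\opH^n(B,\mu)\;\cong\;\opH^n(U,k)_{-\mu},
\]
where the right-hand side denotes the $(-\mu)$-weight space of $\opH^n(U,k)$ under the induced conjugation action of $T$. Thus in either part it suffices to bound, uniformly in $\nu\in X(T)$, the dimension of the $\nu$-weight space of $\opH^n(U,k)$.

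For part (a), my plan is to filter $U$ by its Frobenius kernels and exploit the Lyndon--Hochschild--Serre spectral sequence
\[
E_2^{i,j} \;=\; \opH^i(U^{(1)},\opH^j(U_1,k))\;\Longrightarrow\;\opH^{i+j}(U,k).
\]
To compute $\opH^\bullet(U_1,k)=\opH^\bullet(\fu,k)$ in low degrees one uses, for $p$ sufficiently large, the Friedlander--Parshall/Andersen--Jantzen description via polynomial functions on $\fu$ (so that odd cohomology vanishes and $\opH^{2i}(U_1,k)\cong S^i(\fu^*)^{(1)}$ as $T$-modules), and for arbitrary $p$ the explicit small-degree calculations. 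The $\nu$-weight space of $\opH^2(U,k)$ is then assembled from contributions in the $E_2^{2,0}$, $E_2^{1,1}$, and $E_2^{0,2}$ positions. A careful weight analysis---supplemented by Kostant-type vanishing statements for $\opH^\bullet(\fu,k)_\nu$---forces the total dimension of each such weight space down to $1$. This is precisely the bookkeeping carried out by \cite{Bendel:2007} and, via a different route, by \cite{Wright:2011}.

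For part (b), the hypothesis $p>h$ permits a clean application of the Friedlander--Parshall computation of $\opH^\bullet(U_1,k)$, and in addition Kostant's theorem for the Lie algebra cohomology $\opH^\bullet(\fu,k)$ carries over from characteristic zero (with the usual weight picture involving the dot-action of $W$). Iterating the Frobenius-kernel spectral sequence above---or equivalently following the approach of \cite{Andersen:2007}---one identifies the possible weights occurring in $\opH^3(U,k)$. A given $\nu$-weight space can receive contributions both from ``pure'' classes (corresponding to length-$3$ elements $w\in W$ acting on $0$) and from Frobenius-twisted mixed classes coming from lower pages of the spectral sequence. Careful bookkeeping of these contributions caps the total at $2$.

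The chief obstacle in both parts is the potential for weights from distinct spectral-sequence positions and distinct Weyl group contributions to collide: the combinatorial question of when $w\cdot 0 + p^{t_1}\alpha_1 = w'\cdot 0 + p^{t_2}\alpha_2$ for $w,w'\in W$ and $\alpha_i\in\Phi$ is what ultimately pins down the sharp bounds $1$ and $2$, and is also what forces the hypothesis $p>h$ in part (b)---smaller primes admit more such coincidences and would degrade both the structure of $\opH^\bullet(U_1,k)$ and the bound. Executing the plan amounts to performing the weight-space bookkeeping already carried out in the cited papers, with the one-root-length versus two-root-length cases handled uniformly.
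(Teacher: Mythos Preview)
The paper's own ``proof'' of this theorem is purely a citation: part (a) is attributed to \cite[Theorem 5.8]{Bendel:2007} and \cite[Theorem 4.1.1]{Wright:2011}, and part (b) to \cite[Theorem 5.2]{Andersen:2007}; no argument is given in the paper itself. Your proposal is a reasonable high-level outline of the methods those references use (the reduction $\opH^n(B,\mu)\cong\opH^n(U,k)_{-\mu}$, the Frobenius-kernel spectral sequence, Friedlander--Parshall/Andersen--Jantzen input for $\opH^\bullet(U_1,k)$, and Kostant's theorem when $p>h$), and you yourself acknowledge that carrying it out means reproducing the bookkeeping in those papers. So you are in agreement with the paper, just more expansive: you sketch the content of the citations rather than merely invoking them.

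One minor caution: your identification $\opH^\bullet(U_1,k)=\opH^\bullet(\fu,k)$ is not literally correct---restricted enveloping algebra cohomology differs from ordinary Lie algebra cohomology---and in small degrees for small $p$ the structure of $\opH^j(U_1,k)$ is more delicate than the Friedlander--Parshall picture. This is exactly why \cite{Bendel:2007} and \cite{Wright:2011} are needed for part (a) at arbitrary $p$, and your sketch does gesture at this, but be careful not to conflate the two cohomologies when writing it up.
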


\begin{proof}
For (a), see \cite[Theorem 5.8]{Bendel:2007} and \cite[Theorem 4.1.1]{Wright:2011}. For (b), see \cite[Theorem 5.2]{Andersen:2007}.
\end{proof}

Applying (\ref{eq:Bbound}) to the preceding theorem, and using the fact that $\opH^{\bullet}(G,M) \cong \opH^{\bullet}(B,M)$ for each rational $G$-module $M$, one obtains:

\begin{corollary} \label{corollary:boundsonBcoho}
Let $M$ be a finite-dimensional rational $B$-module. Then 
\begin{itemize}
\item[\textup{(a)}]  $\dim \opH^2(B,M) \leq \dim M$.
\item[\textup{(b)}] If $p > h$, then $\dim \opH^3(B,M) \leq 2 \cdot \dim M$.
\end{itemize}
In particular, if $M$ is a finite-dimensional rational $G$-module, then these inequalities also hold with $B$ replaced by $G$.
\end{corollary}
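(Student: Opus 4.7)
The plan is to deduce Corollary~\ref{corollary:boundsonBcoho} directly from the per-weight estimates in Theorem~\ref{theorem:BNPWAR} by combining them with the weight-space inequality \eqref{eq:Bbound}. Concretely, I would first decompose $M$ as a $T$-module into its weight spaces $M = \bigoplus_{\mu \in X(T)} M_\mu$, noting that composition factors of $M$ as a $B$-module are exactly the characters $\mu$ appearing in this decomposition, each with multiplicity $\dim M_\mu$. Then the long exact sequence in $B$-cohomology associated with short exact sequences of $B$-submodules of $M$ (iterated through a composition series, as used to justify \eqref{eq:Bbound}) gives
\[
\dim \opH^m(B,M) \;\leq\; \sum_{\mu \in X(T)} \dim M_\mu \cdot \dim \opH^m(B,\mu).
\]

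For part~(a), I would insert the bound $\dim \opH^2(B,\mu) \leq 1$ from Theorem~\ref{theorem:BNPWAR}(a) into this inequality, giving
\[
\dim \opH^2(B,M) \;\leq\; \sum_{\mu \in X(T)} \dim M_\mu \cdot 1 \;=\; \dim M.
\]
For part~(b), the hypothesis $p > h$ allows me to invoke Theorem~\ref{theorem:BNPWAR}(b), so that each $\dim \opH^3(B,\mu) \leq 2$, yielding
\[
\dim \opH^3(B,M) \;\leq\; 2 \sum_{\mu \in X(T)} \dim M_\mu \;=\; 2 \dim M.
\]

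To pass from $B$ to $G$, I would apply the standard isomorphism $\opH^\bullet(G,M) \cong \opH^\bullet(B,M)$ for a rational $G$-module $M$, cited from \cite[II.4.7]{Jantzen:2003} in Section~\ref{subsection:Bcohomology}. This turns both inequalities into the corresponding statements for $G$.

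No step here poses a genuine obstacle: all the analytic content lives in Theorem~\ref{theorem:BNPWAR}, whose proofs are imported from \cite{Bendel:2007}, \cite{Wright:2011}, and \cite{Andersen:2007}. The only mild subtlety worth flagging is ensuring that \eqref{eq:Bbound} applies verbatim, which requires only that $M$ be finite-dimensional so that a finite $B$-module composition series exists; this is exactly the hypothesis of the corollary.
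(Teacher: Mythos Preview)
Your proposal is correct and follows exactly the paper's own argument: apply the weight-space inequality \eqref{eq:Bbound} together with the per-weight bounds of Theorem~\ref{theorem:BNPWAR}, and then pass from $B$ to $G$ via the isomorphism $\opH^\bullet(G,M)\cong\opH^\bullet(B,M)$.
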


As in Corollary \ref{corollary:H1Bbound}, a stronger version of the above result can be obtained by considering precisely which weights $\mu$ of $M$ satisfy $\opH^i(B,\mu) \neq 0$. The results in the corollary motivate posing the following question, an affirmative answer to which would yield, for each $m \geq 1$, upper bounds on the dimensions of $\opH^m(B,M)$ and $\opH^m(G,M)$ for each finite-dimensional rational $G$-module $M$.

\begin{question}[Linear boundedness for Borel subgroups]
For each $m \geq 1$, does there exist a constant $C(m)$, depending on $m$ but independent of the rank of $G$ or of the weight $\mu \in X(T)$, such that $\dim \opH^m(B,\mu) \leq C(m)$?
\end{question}

\subsection{Bounds for finite Chevalley groups}

In this section we discuss some rough analogues for the finite subgroup $\Gfq$ of $G$ of the results in Sections \ref{subsection:Gcohomology} and \ref{subsection:boundssecondthird}. While the bounds presented here are often significantly worse than those given by Theorem \ref{theorem:GH}, we point out that they can be obtained using only purely elementary methods. Recall that $p$ is \emph{nonsingular} for $G$ if $p > 2$ when $\Phi$ is of type $B$, $C$, or $F$, and if $p > 3$ when $\Phi$ is of type $G_2$.

\begin{theorem} \label{theorem:ranktimesdimension}
Let $M$ be a finite-dimensional $k\Gfq$-module, and suppose $p$ is nonsingular for $G$. Then
\[
\dim \opH^1(\Gfq,M) \leq r \cdot \abs{\Delta} \cdot \dim M.
\]
\end{theorem}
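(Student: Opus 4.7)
My plan is to reduce the problem to bounding the dimension of the space of $1$-cocycles on the Sylow $p$-subgroup $\Ufq$ of $\Gfq$, and then to bound that dimension by counting a set of group generators for $\Ufq$.

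First I would pass from $\Gfq$ to $\Ufq$. The group $\Ufq$ has order $q^{\abs{\Phi^+}}$, which is the full $p$-part of $\abs{\Gfq}$, so $\Ufq$ is a Sylow $p$-subgroup of $\Gfq$. Because $[\Gfq:\Ufq]$ is invertible in $k$, the standard transfer argument shows that the restriction map
\[
\opH^1(\Gfq,M) \longrightarrow \opH^1(\Ufq,M)
\]
is injective, and it therefore suffices to bound the right-hand side by $r\cdot\abs{\Delta}\cdot\dim M$.

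Next I would invoke the following elementary observation: if a group $H$ is generated by elements $h_1,\dots,h_N$, then the cocycle identity $d(xy)=d(x)+x\cdot d(y)$ forces any $d\in Z^1(H,M)$ to be determined by the tuple $(d(h_1),\dots,d(h_N))\in M^N$. In particular,
\[
\dim \opH^1(H,M) \leq \dim Z^1(H,M) \leq N\cdot\dim M.
\]
So the task reduces to producing a generating set for $\Ufq$ of size $r\cdot\abs{\Delta}$. For each simple root $\alpha\in\Delta$, the root subgroup $U_{-\alpha}(\Fq)$ is isomorphic to $(\Fq,+)\cong(\Fp)^r$ as an abelian group, hence admits an $\Fp$-basis consisting of $r$ elements. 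Taking the union over $\alpha\in\Delta$ gives $r\cdot\abs{\Delta}$ elements of $\Ufq$, and the remaining claim is that they generate $\Ufq$ as a group. This follows from the Chevalley commutator relations: every non-simple root subgroup $U_{-\beta}(\Fq)$ for $\beta\in\Phi^+$ is obtained from iterated commutators of the simple negative root subgroups $U_{-\alpha}(\Fq)$ for $\alpha\in\Delta$, provided the relevant structure constants appearing in the commutator formula are nonzero in $k$. The nonsingularity hypothesis on $p$ is precisely what ensures this, and the three steps combine to yield the asserted inequality.

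The main obstacle is this last step: verifying that, under the nonsingularity assumption, the simple negative root subgroups really do suffice to generate $\Ufq$ via iterated commutators in every Lie type. This is a root-system-by-root-system check using the explicit Chevalley commutator formula, and the small primes excluded in the definition of nonsingularity correspond exactly to the situations in which a needed structure constant vanishes modulo $p$. The Sylow injection and the generator-count bound on $\dim Z^1$ are then essentially formal.
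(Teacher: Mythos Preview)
Your argument is correct and shares its overall architecture with the paper's proof: both reduce to $\Ufq$ via the Sylow transfer injection, and both ultimately rest on the structural fact (valid precisely under the nonsingularity hypothesis) that $\Ufq$ is governed by its simple root subgroups. The difference lies in the second step. The paper first reduces to trivial coefficients by running a $\Ufq$-composition series of $M$ (every irreducible $k\Ufq$-module is trivial since $\Ufq$ is a $p$-group) and the long exact sequence, obtaining $\dim \opH^1(\Ufq,M)\le \dim \opH^1(\Ufq,k)\cdot\dim M$; it then computes $\opH^1(\Ufq,k)=\Hom_k(k\Ufq_{ab},k)$ exactly, using that $\Ufq_{ab}\cong(\Fq)^{\abs{\Delta}}$ when $p$ is nonsingular, to get $\dim \opH^1(\Ufq,k)=r\abs{\Delta}$. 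You instead bound $\dim Z^1(\Ufq,M)$ directly by $(\text{number of generators})\cdot\dim M$ and produce $r\abs{\Delta}$ generators from $\Fp$-bases of the simple root subgroups. Your route is slightly more elementary (no composition series, no identification of $\opH^1$ with the dual of the abelianization), while the paper's route yields the sharper intermediate statement that $\dim \opH^1(\Ufq,k)$ equals $r\abs{\Delta}$ on the nose rather than merely being bounded by it. The commutator-generation claim you flag as the main obstacle is exactly equivalent to the paper's assertion about $\Ufq_{ab}$, and both are handled by the same Chevalley commutator computation under nonsingularity.
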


\begin{proof}
Recall that $\Ufq$ is a Sylow p-subgroup of $\Gfq$ \cite[2.3.4]{Gorenstein:1998}. In particular, the index of $\Ufq$ in $\Gfq$ is prime to $p$, so restriction from $\Gfq$ to $\Ufq$ defines an injection
\[
\Hbul(\Gfq,M) \hookrightarrow \Hbul(\Ufq,M).
\]
Since $\Ufq$ is a $p$-group, each irreducible $k\Ufq$-module is isomorphic to $k$. Then considering a $\Ufq$-composition series for $M$, and using the long exact sequence in cohomology, it follows by induction on the dimension of $M$ that
\[
\dim \opH^1(\Ufq,M) \leq \dim \opH^1(\Ufq,k) \cdot \dim M.
\]
Now $\opH^1(\Ufq,k)$ identifies with the space of $k$-linear maps $k \Ufq_{ab} \rightarrow k$. Here $\Ufq_{ab}$ is the abelianization of $\Ufq$. Since $p$ is nonsingular, it follows from \cite[3.3.1]{Gorenstein:1998} that $\Ufq_{ab} \cong (\Fq)^{\abs{\Delta}}$ as an abelian group. Specifically, $\Ufq_{ab}$ identifies with the direct product of the root subgroups in $\Ufq$ corresponding to simple roots. Since $q = p^r$, $k \Fq = k \otimes_{\Fp} \Fq$ has $k$-dimension $r$. Then $\dim \opH^1(\Ufq,k) = r \cdot \abs{\Delta}$.
\end{proof}

More generally, one can argue as in the proof of the theorem to show for each $m \geq 0$ that
\[
\dim \opH^m(\Gfq,M) \leq \dim \opH^m(\Ufq,k) \cdot \dim M.
\]
In turn, the dimension of $\opH^m(\Ufq,k)$ is bounded above by the dimension of the cohomology group $\opH^m(u(\fu^{\oplus r}),k)$ for the restricted enveloping algebra $u(\fu^{\oplus r})$; see \cite[2.4]{UGA:2013}.

\section{Bounds depending on weight space multiplicities} \label{section:boundsweightspaces}

\subsection{}

Next we explore some bounds on $\dim \opH^1(G,M)$ that depend on weight space multiplicities.

\begin{lemma}
Let $\lambda \in X(T)_+$. Then $\dim \opH^1(G,L(\lambda)) \leq \dim H^0(\lambda)_0$.
\end{lemma}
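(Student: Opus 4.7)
The plan is to read off $\opH^1(G, L(\lambda))$ from the short exact sequence
\[
0 \to L(\lambda) \to H^0(\lambda) \to N \to 0,
\]
where $N = H^0(\lambda)/L(\lambda)$, by applying the long exact sequence in $G$-cohomology. The case $\lambda = 0$ is immediate, since $L(0) = H^0(0) = k$, $\opH^1(G,k) = 0$ by \cite[II.4.11]{Jantzen:2003}, and $\dim H^0(0)_0 = 1$. I therefore concentrate on $\lambda \neq 0$.

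The central technical input is the vanishing $\opH^1(G, H^0(\lambda)) = 0$. I would deduce this by combining Kempf vanishing, which gives $R^i \ind_B^G \lambda = 0$ for $i > 0$, with the degeneration of the Grothendieck spectral sequence for the composition $(-)^G = (-)^B \circ \ind_B^G$; this produces the isomorphism $\opH^i(G, H^0(\lambda)) \cong \opH^i(B, \lambda)$. Andersen's computation of $\opH^1(B,\mu)$, cited above, then forces $\opH^1(B, \lambda) = 0$, because a dominant weight $\lambda$ is never of the form $-p^t\alpha$ with $\alpha \in \Delta$ and $t \geq 0$ (its pairing with $\alpha^\vee$ would be strictly negative). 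Alternatively, one can appeal to the standard fact that $\Ext^i_G(V(0), H^0(\lambda)) = 0$ for $i \geq 1$, since $V(0) = k$ has a Weyl filtration and $H^0(\lambda)$ is trivially good-filtered.

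With this vanishing in hand, and using that $L(\lambda)^G = H^0(\lambda)^G = 0$ for $\lambda \neq 0$ (any $G$-invariant vector in $H^0(\lambda)$ lies in its socle $L(\lambda)$, which is a nontrivial irreducible), the long exact sequence collapses to an isomorphism
\[
\opH^1(G, L(\lambda)) \cong N^G.
\]
Since any $G$-invariant vector has weight $0$, one has $N^G \subseteq N_0 = H^0(\lambda)_0 / L(\lambda)_0$, and therefore
\[
\dim \opH^1(G, L(\lambda)) \;\leq\; \dim H^0(\lambda)_0 - \dim L(\lambda)_0 \;\leq\; \dim H^0(\lambda)_0,
\]
which is the desired bound (and in fact sharpens it slightly).

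The only substantive step is the vanishing $\opH^1(G, H^0(\lambda)) = 0$; the remainder of the argument is routine bookkeeping with the long exact sequence and the elementary observation that invariants sit inside the zero weight space. I expect the cleanest write-up to invoke Andersen's theorem directly, mirroring the style of the preceding results in the section.
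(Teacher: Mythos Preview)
Your proposal is correct and follows essentially the same route as the paper: handle $\lambda=0$ separately, then for $\lambda\neq 0$ apply the long exact sequence to $0\to L(\lambda)\to H^0(\lambda)\to Q\to 0$, use $\opH^1(G,H^0(\lambda))=0$ (the paper cites \cite[II.4.13]{Jantzen:2003} directly rather than deriving it via Andersen), and bound $\dim Q^G$ by zero-weight data. The only cosmetic difference is that you bound $\dim N^G$ by $\dim N_0=\dim H^0(\lambda)_0-\dim L(\lambda)_0$, whereas the paper bounds it by the composition multiplicity of $k$ in $H^0(\lambda)$ and then by $\dim H^0(\lambda)_0$; your sharper intermediate inequality is exactly what the paper records in the paragraph immediately following the lemma.
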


\begin{proof} If $\lambda=0$, then the result follows because $H^0(0) = L(0) = k$ and  $\opH^1(G,k)=0$. So assume that $\lambda \neq 0$.
There exists a short exact sequence of $G$-modules
\[
0 \rightarrow L(\lambda) \rightarrow H^0(\lambda) \rightarrow Q \rightarrow 0.
\]
Since $\opH^1(G,H^0(\lambda)) = 0$ by \cite[II.4.13]{Jantzen:2003}, and since $\Hom_G(k,L(\lambda)) = 0$ by the assumption $\lambda \neq 0$, the corresponding long exact sequence in cohomology yields $\opH^1(G,L(\lambda)) \cong \Hom_G(k,Q)$. Now the multiplicity of the trivial module in $\soc_G Q$ is bounded above by the composition multiplicity of the trivial module in $H^0(\lambda)$, which is bounded above by the weight space multiplicity $\dim H^0(\lambda)_0$.
\end{proof}

In some cases we can improve the conclusion of the lemma to a strict inequality. Suppose that the set of weights of $T$ in $L(\lambda)$ is equal to that of $H^0(\lambda)$. By \cite{Premet:1987}, this condition is known to hold if $\lambda \in X_1(T)$ and $p$ is good for $\Phi$. Recall that $p$ is good for $\Phi$ provided $p > 2$ when $\Phi$ is of type $B_n$, $C_n$, or $D_n$; $p > 3$ when $\Phi$ is of type $F_4$, $G_2$, $E_6$, or $E_7$; and provided $p > 5$ when $\Phi$ is of type $E_8$. By the Linkage Principle, $\opH^1(G,L(\lambda)) = 0$ unless $\lambda \in \Z\Phi$. It is well known that the set of weights of $H^0(\lambda)$ is saturated. Thus, if $\lambda \in \Z\Phi \cap X(T)_+$, then $0$ is a weight of $H^0(\lambda)$, and hence also of $L(\lambda)$. In particular, $\dim Q_0 < \dim H^0(\lambda)_0$. Since $\dim \Hom_G(k,Q)$ is bounded above by $\dim Q_0$, we obtain in this case that $\dim \opH^1(G,L(\lambda)) < \dim H^0(\lambda)_0$.

\subsection{} \label{subsection:relatetoLiealgebra}

We now present a preliminary result that relates $\opH^{1}(G,M)$ to cohomology for $\g = \Lie(G)$, by way of cohomology for the Frobenius kernel $G_1$. Write $\Hbul(\g,M) = \Hbul(\Ug,M)$ for the ordinary Lie algebra cohomology of $\g$ with coefficients in the $\g$-module $M$. If $M$ is a rational $G$-module, then the adjoint action of $G$ on $\g$, together with the given action of $G$ on $M$, induce a rational $G$-module structure on $\Hbul(\g,M)$.

\begin{lemma} \label{lemma:Exttransfer}
Let $M$ be a rational $G$-module, and suppose that $M^{G_1} = 0$. Then
\begin{itemize}
\item[\textup{(a)}] Restriction from $G$ to $G_1$ induces an isomorphism $\opH^1(G,M) \cong \opH^1(G_1,M)^{G/G_1}$.
\item[\textup{(b)}] There exists a $G$-equivariant isomorphism $\opH^1(G_1,M) \cong \opH^1(\g,M)$.
\item[\textup{(c)}] $\opH^1(G,M) \cong \opH^1(\g,M)^G$.
\end{itemize}
\end{lemma}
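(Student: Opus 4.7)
The three assertions form a cascade: I would prove (a) and (b) directly, then deduce (c) by combining them. The approach centers on the Lyndon--Hochschild--Serre spectral sequence for $G_1 \triangleleft G$ together with the identification of $G_1$-cohomology with restricted Lie algebra cohomology.

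For part (a), the plan is to apply the LHS spectral sequence for the normal infinitesimal subgroup $G_1 \trianglelefteq G$, of the form
\[
E_2^{i,j} = \opH^i(G/G_1, \opH^j(G_1, M)) \;\Longrightarrow\; \opH^{i+j}(G, M)
\]
(see \cite[I.6.6]{Jantzen:2003}). Its associated five-term exact sequence reads
\[
0 \to \opH^1(G/G_1, M^{G_1}) \to \opH^1(G, M) \to \opH^1(G_1, M)^{G/G_1} \to \opH^2(G/G_1, M^{G_1}).
\]
The hypothesis $M^{G_1} = 0$ annihilates both the first and the fourth terms, forcing the middle arrow to be an isomorphism.

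For part (b), first identify $\opH^1(G_1, M) \cong \Ext^1_{\ug}(k,M)$ via $\Dist(G_1) = \ug$, while $\opH^1(\g, M) = \Ext^1_{\Ug}(k,M)$ by definition. The algebra surjection $\Ug \twoheadrightarrow \ug$ induces a canonical $G$-equivariant comparison homomorphism between these Ext groups, where the $G$-equivariance comes from the adjoint action of $G$ on $\g$ together with its given action on $M$. I expect the principal obstacle to be verifying that this comparison is bijective: at the level of cocycles it corresponds to forgetting the restrictedness relation $f(x^{[p]}) = x^{p-1}f(x)$, and the obstruction measuring the failure of an ordinary $\g$-cocycle to be restricted is a $p$-semilinear map $\g \to M$. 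The plan is to show this obstruction class vanishes for rational $G$-modules by the standard analysis in \cite[I.9]{Jantzen:2003}, exploiting that the $p$-power operator on $\g$ acts on $M$ through the restricted representation coming from the algebraic group structure.

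Part (c) then combines (a) and (b). Since $G_1$ acts trivially on its own cohomology, the natural $G$-module structure on $\opH^1(G_1, M)$ factors through the quotient $G/G_1$, so $\opH^1(G_1, M)^{G/G_1} = \opH^1(G_1, M)^G$. Transporting through the $G$-equivariant isomorphism of (b) and applying (a) gives
\[
\opH^1(G, M) \;\cong\; \opH^1(G_1, M)^G \;\cong\; \opH^1(\g, M)^G,
\]
completing the chain.
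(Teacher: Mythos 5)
Your parts (a) and (c) coincide with the paper's argument: the five-term exact sequence of the Lyndon--Hochschild--Serre spectral sequence, with $E_2^{i,0} = \opH^i(G/G_1, M^{G_1}) = 0$, gives (a), and (c) is the formal combination of (a) and (b) exactly as you say. For (b) you also choose the right framework---the comparison map induced by $\Ug \twoheadrightarrow \ug$ and the analysis of \cite[I.9]{Jantzen:2003}---but your stated reason for bijectivity has a gap.

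The gap: you claim the obstruction to an ordinary cocycle being restricted ``vanishes for rational $G$-modules'' because the $p$-power operator on $\g$ acts on $M$ through the restricted representation. Restrictedness of $M$ is only what makes the comparison map well defined and identifies $M^{\g}$ with $M^{G_1}$; it does not kill the obstruction. For a derivation $D\colon \g \to M$ the obstruction is $\varphi_D(x) = D(x^{[p]}) - x^{p-1}\cdot D(x)$, which depends on $D$ and not merely on the module structure, and it is a $p$-semilinear map with values in $M^{\g}$, not just in $M$. The paper's proof uses the exact sequence $0 \to \opH^1(G_1,M) \to \opH^1(\g,M) \to \Hom^s(\g,M^{\g})$ of \cite[I.9.19(1)]{Jantzen:2003} and kills the third term precisely because $M^{\g} = M^{\ug} = M^{G_1} = 0$; this is where the hypothesis $M^{G_1}=0$ enters part (b), and your write-up never invokes it there. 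Without that hypothesis the map $\opH^1(G_1,M)\to\opH^1(\g,M)$ is injective but need not be surjective, so your argument as stated would establish a claim stronger than what is true. The fix is one line: observe that the obstruction lands in $\Hom^s(\g, M^{\g})$ and that $M^{\g} = M^{G_1} = 0$ by hypothesis.
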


\begin{proof}
Consider the Lyndon--Hochschild--Serre spectral sequence
\begin{equation} \label{eq:LHSforG1}
E_2^{i,j} = \opH^i(G/G_1,\opH^j(G_1,M)) \Rightarrow \opH^{i+j}(G,M),
\end{equation}
and its associated five-term exact sequence
\begin{equation} \label{eq:LHSfiveterm}
0 \rightarrow E_2^{1,0} \rightarrow \opH^1(G,M) \rightarrow E_2^{0,1} \rightarrow E_2^{2,0} \rightarrow \opH^2(G,M).
\end{equation}
Since $\Hom_{G_1}(k,M) = M^{G_1} = 0$, one has $E_2^{i,0} = 0$ for all $i \geq 0$. Then \eqref{eq:LHSfiveterm} yields that restriction from $G$ to $G_1$ defines an isomorphism $\opH^1(G,M) \cong \opH^1(G_1,M)^{G/G_1}$. This proves (a).

For (b), recall that the representation theory of the restricted enveloping algebra $\ug$ is naturally equivalent to that of the Frobenius kernel $G_1$ \cite[I.9.6]{Jantzen:2003}. We thus identify $\opH^1(G_1,M)$ and $\opH^1(\ug,M)$ via this equivalence. Since $\ug$ is a homomorphic image of $\Ug$, and since the quotient map $\Ug \rightarrow \ug$ is compatible with the adjoint action of $G$, there exists a corresponding $G$-module homomorphism $\opH^1(G_1,M) \rightarrow \opH^1(\g,M)$, which by \cite[I.9.19(1)]{Jantzen:2003} fits into an exact sequence
\begin{equation} \label{eq:ordinarycohomologyexactseq}
0 \rightarrow \opH^1(G_1,M) \rightarrow \opH^1(\g,M) \rightarrow \Hom^s(\g,M^\g).
\end{equation}
Here, given a vector space $V$, $\Hom^s(\g,V)$ denotes the set of additive functions $\varphi:\g \rightarrow V$ satisfying the property $\varphi(ax) = a^p\varphi(x)$ for all $a \in k$ and $x \in \g$. Since $M$ is a rational $G$-module, it is in particular a restricted $\g$-module, i.e., a $\ug$-module. Then $M^{\g} = M^{\Ug} = M^{\ug} = M^{G_1}$. This space is zero by assumption, so we conclude that $\opH^1(G_1,M) \cong \opH^1(\g,M)$. This proves (b). Now (c) follows immediately from (a) and (b).  
\end{proof}

Replacing $G$ by $B$ in the previous proof, one obtains the following lemma:

\begin{lemma} \label{lemma:BtoU1}
Let $M$ be a rational $B$-module, and suppose that $M^{B_1} = 0$. Then restriction from $B$ to $B_1$ induces an isomorphism $\opH^1(B,M) \cong \opH^1(B_1,M)^{B/B_1}$. In particular, restriction from $B$ to $U_1$ defines an injection $\opH^1(B,M) \hookrightarrow \opH^1(U_1,M)^T$.
\end{lemma}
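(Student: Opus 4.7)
The plan is to run the proof of Lemma~\ref{lemma:Exttransfer}(a) \emph{mutatis mutandis} for the first isomorphism, and then chain in a second Lyndon--Hochschild--Serre spectral sequence to pass from $B_1$-cohomology to $U_1$-cohomology. For the first isomorphism I would set up the LHS spectral sequence
\[
E_2^{i,j} = \opH^i(B/B_1,\opH^j(B_1,M)) \Rightarrow \opH^{i+j}(B,M)
\]
associated to the normal subgroup scheme $B_1 \triangleleft B$. The hypothesis $M^{B_1}=0$ kills the entire bottom row $E_2^{i,0}=\opH^i(B/B_1,M^{B_1})=0$ for all $i\geq 0$, and the five-term exact sequence yields the edge isomorphism $\opH^1(B,M)\cong\opH^1(B_1,M)^{B/B_1}$ induced by restriction, exactly as in Lemma~\ref{lemma:Exttransfer}(a).

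For the injection I would exploit the semidirect decomposition $B_1 = T_1 \ltimes U_1$ via a second LHS spectral sequence
\[
E_2^{i,j} = \opH^i(T_1,\opH^j(U_1,M)) \Rightarrow \opH^{i+j}(B_1,M).
\]
The first Frobenius kernel $T_1$ of the torus $T$ is an infinitesimal diagonalizable group scheme, hence linearly reductive: every rational $T_1$-module decomposes as a direct sum of its weight spaces, so $\opH^i(T_1,V)=0$ for all $i>0$ and every $T_1$-module $V$. The spectral sequence therefore collapses on the $E_2$-page, and restriction yields an isomorphism $\opH^1(B_1,M)\cong\opH^1(U_1,M)^{T_1}$. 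Composing the injection $\opH^1(B,M)\hookrightarrow\opH^1(B_1,M)$ from the first part with the tautological inclusion $\opH^1(U_1,M)^{T_1}\hookrightarrow\opH^1(U_1,M)$ produces an injection which, by construction, equals restriction from $B$ to $U_1$. Its image lands in $\opH^1(U_1,M)^T$ because $T\subset B$ acts trivially on $\opH^1(B,M)$ (inner automorphisms act trivially on group-scheme cohomology), while restriction to $\opH^1(U_1,M)$ is $T$-equivariant with respect to the natural $T$-action coming from conjugation on $U_1$ together with the $T$-action on $M$.

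The step I expect to require the most care is the linear reductivity of $T_1$ that drives the collapse of the second spectral sequence; this is standard in the theory of infinitesimal group schemes (see, e.g., \cite{Jantzen:2003}) but is the only non-formal input. Everything else—naturality of the two spectral sequences, identification of the composite edge map with restriction, and $T$-equivariance of the restriction map—is routine bookkeeping, so no genuine obstacle is anticipated beyond invoking this one standard fact.
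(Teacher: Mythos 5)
Your proposal is correct and follows essentially the same route as the paper: the first isomorphism is the Lyndon--Hochschild--Serre argument of Lemma~\ref{lemma:Exttransfer}(a) with $G$ replaced by $B$, and the second part rests on the identification $\opH^1(B_1,M)\cong\opH^1(U_1,M)^{T_1}$ coming from the semidirect decomposition $B_1=T_1\ltimes U_1$ and the linear reductivity of the diagonalizable group scheme $T_1$. The only cosmetic difference is that you derive this last isomorphism from the collapse of a second LHS spectral sequence, whereas the paper simply cites \cite[I.6.9]{Jantzen:2003} for the same fact.
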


\begin{proof}
It remains to explain the last statement in the lemma. Since $B_1$ is the semidirect product of $U_1$ and the diagonalizable group scheme $T_1$, it follows that restriction from $B_1$ to $U_1$ defines an isomorphism $\Hbul(B_1,M) \cong \Hbul(U_1,M)^{T_1}$ \cite[I.6.9]{Jantzen:2003}. Then restriction from $B$ to $U_1$ defines an isomorphism $\opH^1(B,M) \cong \opH^1(U_1,M)^{B/U_1}$, and the latter space is a subspace of $\opH^1(U_1,M)^T$.
\end{proof}

\subsection{} \label{subsection:sumsimpleroots}

The results in the preceding section can be employed to establish upper bounds for $\opH^{1}(G,M)$ in terms of specific weight space multiplicities.

\begin{proposition} \label{proposition:sumsimpleroots}
Let $M$ be a rational $B$-module, and suppose that $M^{B_1} = 0$. Let $\Delta' \subseteq \Phi^+$ be a set of roots such that the root spaces $\fu_{-\alpha}$ for $\alpha \in \Delta'$ generate $\fu$ as a Lie algebra. Then
\[ \textstyle
\dim \opH^1(B,M) \leq \sum_{\alpha \in \Delta'} \dim M_{-\alpha} - \dim M_0.
\]
\end{proposition}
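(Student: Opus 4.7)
The plan is to bound $\opH^1(B,M)$ by pushing it down a chain of cohomology groups to ordinary Lie algebra cohomology of $\fu$, where a $1$-cocycle has a concrete description as a derivation. First I would apply Lemma \ref{lemma:BtoU1} to inject $\opH^1(B,M)$ into $\opH^1(U_1,M)^T$. Then, arguing exactly as in the proof of Lemma \ref{lemma:Exttransfer}(b) but with $U$ in place of $G$, the Hochschild sequence \cite[I.9.19(1)]{Jantzen:2003} for the pair $(U_1,\fu)$ yields a $T$-equivariant injection $\opH^1(U_1,M) \hookrightarrow \opH^1(\fu,M)$. Combining these inclusions gives $\dim \opH^1(B,M) \leq \dim \opH^1(\fu,M)^T$.

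The core step is bounding $\opH^1(\fu,M)^T$ via the Chevalley--Eilenberg complex. A $1$-cocycle is a derivation $\varphi \colon \fu \to M$, i.e.\ a linear map satisfying $\varphi([x,y]) = x\cdot\varphi(y) - y\cdot\varphi(x)$; such a map is determined by its values on any Lie-algebra generating set of $\fu$. Since the (one-dimensional) root spaces $\fu_{-\alpha}$ for $\alpha \in \Delta'$ are assumed to generate $\fu$, restriction gives an injection $Z^1(\fu,M)^T \hookrightarrow \bigoplus_{\alpha \in \Delta'} \Hom(\fu_{-\alpha}, M_{-\alpha}) \cong \bigoplus_{\alpha \in \Delta'} M_{-\alpha}$, yielding $\dim Z^1(\fu,M)^T \leq \sum_{\alpha \in \Delta'} \dim M_{-\alpha}$. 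On the coboundary side, the differential $d \colon m \mapsto (x \mapsto x\cdot m)$ maps $M_0$ onto $B^1(\fu,M)^T$, with kernel $M_0 \cap M^{\fu}$.

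The subtlest point---and what I expect to be the main obstacle---is showing that this kernel vanishes. I would argue as follows: $M^{\fu} = M^{U_1}$ because the $\fu$-action on $M$ factors through $u(\fu)$, and $M_0 \subseteq M^{T_1}$ since $0 \in pX(T)$. Consequently $M_0 \cap M^{\fu} \subseteq M^{T_1} \cap M^{U_1} = M^{B_1} = 0$ by hypothesis. Hence $\dim B^1(\fu,M)^T = \dim M_0$, and assembling the estimates gives
\[ \textstyle
\dim \opH^1(B,M) \;\leq\; \dim Z^1(\fu,M)^T - \dim M_0 \;\leq\; \sum_{\alpha \in \Delta'} \dim M_{-\alpha} - \dim M_0,
\]
as claimed. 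The rest of the argument is purely formal once the identification $M_0 \cap M^{U_1} \subseteq M^{B_1}$ is in hand, and no assumption on $M^{U_1}$ beyond the given one on $M^{B_1}$ is needed for the coboundary count.
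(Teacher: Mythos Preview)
Your proposal is correct and follows essentially the same argument as the paper's proof: both reduce to $\opH^1(\fu,M)^T$ via Lemma~\ref{lemma:BtoU1} and the Hochschild sequence, then compute this as $\dim \Der(\fu,M)^T - \dim \Inn(\fu,M)^T$ (equivalently $\dim Z^1 - \dim B^1$ in your Chevalley--Eilenberg language), bounding the first term by restricting $T$-invariant derivations to the generators $\fu_{-\alpha}$, $\alpha\in\Delta'$, and identifying the second term with $\dim M_0$ via $(M^{\fu})^T = (M^{U_1})^T \subseteq M^{B_1} = 0$. The only cosmetic difference is that the paper phrases the coboundary computation as $\Inn(\fu,M)^T \cong (M/M^{\fu})^T \cong M_0$, whereas you track the kernel $M_0\cap M^{\fu}$ directly; these are the same calculation.
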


\begin{proof}
First, $\opH^1(B,M)$ injects into $\opH^1(U_1,M)^T$ by Lemma \ref{lemma:BtoU1}. Next, replacing $G$ by $U$ in \eqref{eq:ordinarycohomologyexactseq}, there exists a $B$-equivariant injection $\opH^1(U_1,M) \hookrightarrow \opH^1(\fu,M)$. Now recall that $\opH^1(\fu,M)$ fits into an exact sequence
\begin{equation} \label{eq:DerSES}
0 \rightarrow \Inn(\fu,M) \rightarrow \Der(\fu,M) \rightarrow \opH^1(\fu,M) \rightarrow 0.
\end{equation}
Here $\Der(\fu,M)$ is the space of all Lie algebra derivations of $\fu$ into $M$, and $\Inn(\fu,M)$ is the space of all inner derivations of $\fu$ into $M$. Since $M$ is a rational $B$-module, the conjugation action of $B$ on $\fu$ makes \eqref{eq:DerSES} into an exact sequence of rational $B$-modules. Then applying the exact functor $(-)^T$ to \eqref{eq:DerSES}, one obtains
\[
\dim \opH^1(B,M) \leq \dim \opH^1(\fu,M)^T = \dim \Der(\fu,M)^T - \dim \Inn(\fu,M)^T.
\]
As rational $B$-modules, $\Inn(\fu,M) \cong M/M^{\fu}$. Observe that $(M^{\fu})^T = (M^{U_1})^T \subseteq M^{B_1} = 0$. Then it follows that $\Inn(\fu,M)^T \cong (M/M^\fu)^T \cong M_0$. Finally, a Lie algebra derivation $\fu \rightarrow M$ is completely determined by its action on a set of Lie algebra generators for $\fu$, say, the root subspaces $\fu_{-\alpha}$ for $\alpha \in \Delta'$. Moreover, a $T$-invariant derivation $\fu \rightarrow M$ must map $\fu_{-\beta}$ into $M_{-\beta}$ for each $\beta \in \Phi^+$. Then $\dim \Der(\fu,M)^T \leq \sum_{\alpha \in \Delta'} \dim M_{-\alpha}$. Combining this with the previous observations, we obtain the inequality in the statement of the proposition.
\end{proof}

\begin{remark}
If $p$ is nonsingular for $G$, then one can take $\Delta' = \Delta$ in Proposition \ref{proposition:sumsimpleroots}.
\end{remark}

The next result is an analogue for algebraic groups of \cite[Corollary 2.9]{Cline:1975}. Let $\alpha_0$ be the highest short root in $\Phi$, and let $\wtalpha$ be the highest long root in $\Phi$.

\begin{corollary} \label{corollary:sumsimpleroots}
Let $M$ be a rational $G$-module, and suppose that $M^{B_1} = 0$. Let $\Delta' \subseteq \Phi^+$ be a set of roots such that the root spaces $\fu_{-\alpha}$ for $\alpha \in \Delta'$ generate $\fu$ as a Lie algebra. Then
\[ \textstyle
\dim \opH^1(G,M) \leq \sum_{\alpha \in \Delta'} \dim M_{\alpha} - \dim M_0.
\]
In particular, suppose $p$ is nonsingular for $G$. Then
\[
\dim \opH^1(G,M) \leq \abs{\Delta_s} \cdot \dim M_{\alpha_0} + \abs{\Delta_l} \cdot \dim M_{\wtalpha} - \dim M_0,
\]
where by convention we consider all roots in $\Phi$ as long, and set $\Delta_s = \emptyset$, whenever $\Phi$ has roots of only a single root length.
\end{corollary}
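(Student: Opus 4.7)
The plan is to derive this corollary as a formal consequence of Proposition~\ref{proposition:sumsimpleroots}, using the restriction isomorphism to pass from $G$ to $B$ and then using the $W$-invariance of weight multiplicities to flip signs.

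First I would observe that since $M$ is a rational $G$-module, one has $\opH^1(G,M) \cong \opH^1(B,M)$ by \cite[II.4.7]{Jantzen:2003} (as already noted in Section~\ref{subsection:Bcohomology}), and the hypothesis $M^{B_1}=0$ carries over verbatim to $M$ viewed as a $B$-module by restriction. Applying Proposition~\ref{proposition:sumsimpleroots} then yields
\[
\dim \opH^1(G,M) \;\leq\; \sum_{\alpha \in \Delta'} \dim M_{-\alpha} \;-\; \dim M_0.
\]
To convert this into the stated bound, I would invoke the fact that the formal character of a rational $G$-module is $W$-invariant: for each $\alpha \in \Delta' \subseteq \Phi^+$, the reflection $s_\alpha \in W$ satisfies $s_\alpha(\alpha) = -\alpha$, and hence $\dim M_\alpha = \dim M_{-\alpha}$. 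Substituting this equality into each summand gives the first inequality of the corollary.

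For the refined bound under the nonsingularity hypothesis on $p$, I would invoke the remark following Proposition~\ref{proposition:sumsimpleroots} to take $\Delta' = \Delta$, and then decompose $\Delta = \Delta_s \sqcup \Delta_l$ according to root length. Using the classical fact that the short roots (respectively, the long roots) of $\Phi$ form a single $W$-orbit, and that $\alpha_0$ and $\wtalpha$ are representatives of these orbits, $W$-invariance of weight multiplicities gives $\dim M_\alpha = \dim M_{\alpha_0}$ for every $\alpha \in \Delta_s$ and $\dim M_\alpha = \dim M_{\wtalpha}$ for every $\alpha \in \Delta_l$. Summing over $\Delta$ produces the displayed inequality, and the stated convention handles the simply-laced case ($\Delta_s = \emptyset$, all roots treated as long) uniformly.

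There is no serious obstacle here; all the substantive cohomological work has already been carried out in Proposition~\ref{proposition:sumsimpleroots}. The only points one must be careful about are (i) verifying that taking $\Delta' = \Delta$ is legitimate precisely when $p$ is nonsingular, so that the simple root spaces generate $\fu$ as a Lie algebra, and (ii) keeping track of the $W$-orbit structure on $\Phi$ so that the two-term decomposition with $\alpha_0$ and $\wtalpha$ is valid. Both are standard.
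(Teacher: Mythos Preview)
Your proof is correct and follows essentially the same strategy as the paper's: reduce to $B$-cohomology via the restriction isomorphism, apply Proposition~\ref{proposition:sumsimpleroots}, and then use $W$-invariance of weight multiplicities to pass from $M_{-\alpha}$ to $M_{\alpha}$ and to identify the summands for $\alpha\in\Delta_s$ (resp.\ $\Delta_l$) with $\dim M_{\alpha_0}$ (resp.\ $\dim M_{\wtalpha}$). The only cosmetic difference is that the paper first replaces $\Delta'$ by $-w_0\Delta'$ (observing this still generates $\fu$) before applying the Proposition, whereas you apply the Proposition directly and flip each sign with the reflection $s_\alpha$; your route is slightly more direct but otherwise equivalent.
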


\begin{proof}
Observe that if the root spaces $\fu_{-\alpha}$ for $\alpha \in \Delta'$ generate $\fu$ as a Lie algebra, then so do the root spaces $\fu_{w_0\alpha}$ for $\alpha \in \Delta'$. Here $w_0 \in W$ is the longest element of $W$. One has $\Hbul(G,M) \cong \Hbul(B,M)$, so $\dim \opH^1(G,M) \leq \sum_{\alpha \in \Delta'} \dim M_{w_0\alpha} - \dim M_0$. But $\dim M_\lambda = \dim M_{w\lambda}$ whenever $w \in W$, so we conclude that $\dim \opH^1(G,M) \leq \sum_{\alpha \in \Delta'} \dim M_{\alpha} - \dim M_0$. In particular, if $p$ is nonsingular for $G$, then we can take $\Delta' = \Delta$. Since all roots of a given root length in $\Phi$ are conjugate under $W$, we then have $\dim M_\alpha = \dim M_{\alpha_0}$ whenever $\alpha \in \Delta_s$, and $\dim M_\alpha = \dim M_{\wtalpha}$ whenever $\alpha \in \Delta_l$, so that $\sum_{\alpha \in \Delta} \dim M_\alpha = \abs{\Delta_s} \cdot \dim M_{\alpha_0} + \abs{\Delta_l} \cdot \dim M_{\wtalpha}$.
\end{proof}

The bounds established in Corollary \ref{corollary:sumsimpleroots} can be improved if we assume that the Weyl group has order prime to $p$. For the next theorem, recall that if $M$ is a rational $G$-module, then the $0$-weight space $M_0$ of $M$ is naturally a module for $W = N_G(T)/T$.

\begin{theorem} \label{theorem:highestrootdimension}
Let $M$ be a rational $G$-module. Assume that $M^{G_1} = 0$, and that $p \nmid \abs{W}$. Then
\[
\dim \opH^1(G,M) \leq \dim M_{\alpha_0} + \dim M_{\wtalpha} - \dim M_0,
\]
where by convention we say $M_{\alpha_0} = 0$ if $\Phi$ has roots of only a single root length. In particular,
\[
\dim \opH^1(G,M) \leq \begin{cases}
\frac{1}{\abs{\Phi}} \cdot \dim M & \text{if $\Phi$ has one root length,} \\
\frac{2}{\abs{\Phi}} \cdot \dim M & \text{if $\Phi$ has two root lengths.} \end{cases}
\]
\end{theorem}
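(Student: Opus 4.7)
The plan is to identify $\opH^1(G,M)$ with $\opH^1(\g,M)^G$ via Lemma \ref{lemma:Exttransfer}(c) and then sharpen the bound of Corollary \ref{corollary:sumsimpleroots} by exploiting exactness of $W$-invariants (valid since $p \nmid \abs{W}$) together with $W$-transitivity on short and long root orbits. The heuristic is that in Corollary \ref{corollary:sumsimpleroots} each of the $\abs{\Delta_s}$ simple short roots contributes a separate copy of $\dim M_{\alpha_0}$ (and similarly for long), and taking $G$-invariants, which factors through $W$-invariants on the zero weight space, should collapse each $W$-orbit of these contributions into a single copy.

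Concretely, since $M^{G_1}=M^\g=0$, Lemma \ref{lemma:Exttransfer}(c) gives $\opH^1(G,M)\cong\opH^1(\g,M)^G$. For any rational $G$-module $V$ one has $V^G=V^B\subseteq(V_0)^W$, where $W=N_G(T)/T$ acts on $V_0$. Writing $\opH^1(\g,M)=\Der(\g,M)/\Inn(\g,M)$, and using $\Inn(\g,M)\cong M$ as $G$-modules (from $M^\g=0$), passing to $T$-invariants yields the short exact sequence of $W$-modules
\[
0 \to M_0 \to \Der_T(\g,M) \to \opH^1(\g,M)_0 \to 0.
\]
Exactness of $(-)^W$ then produces $(\opH^1(\g,M)_0)^W=\Der_T(\g,M)^W/(M_0)^W$, so $\dim\opH^1(G,M)\leq\dim\Der_T(\g,M)^W-\dim(M_0)^W$.

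The key geometric input is that a $T$-equivariant derivation $\varphi:\g\to M$ kills $\fh$: the derivation identity applied to $[h,x_\alpha]=\alpha(h)x_\alpha$ forces $x_\alpha\cdot\varphi(h)=0$ for every root $\alpha$, hence $\varphi(h)\in M^\g=0$. Thus $\Der_T(\g,M)$ embeds $W$-equivariantly into $\bigoplus_{\alpha\in\Phi}M_\alpha=\bigoplus_{\alpha\in\Phi_s}M_\alpha\oplus\bigoplus_{\alpha\in\Phi_l}M_\alpha$. By transitivity of $W$ on $\Phi_s$ (resp.\ $\Phi_l$), each summand is an induced $W$-representation: $\bigoplus_{\alpha\in\Phi_s}M_\alpha\cong\ind_{W_{\alpha_0}}^W M_{\alpha_0}$, and similarly for long. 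Frobenius reciprocity gives $(\bigoplus_{\alpha\in\Phi_s}M_\alpha)^W\cong(M_{\alpha_0})^{W_{\alpha_0}}$, of dimension at most $\dim M_{\alpha_0}$, whence $\dim\Der_T(\g,M)^W\leq\dim M_{\alpha_0}+\dim M_{\wtalpha}$. The ``in particular'' bounds follow from $\abs{\Phi_s}\dim M_{\alpha_0}+\abs{\Phi_l}\dim M_{\wtalpha}\leq\dim M$.

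The main obstacle lies in the subtraction term. The argument sketched above yields $\dim\opH^1(G,M)\leq\dim M_{\alpha_0}+\dim M_{\wtalpha}-\dim(M_0)^W$, whereas the theorem asserts the stronger $-\dim M_0$. These agree when $W$ acts trivially on $M_0$, but for example with $M=\g$ one has $M_0=\fh$ with $\fh^W=0$ while $\dim M_0=\rank$, so the two bounds differ by $\rank$. Closing this gap will require exploiting the extra structure of $\Der_T(\g,M)$ inside $\bigoplus_{\alpha\in\Phi}M_\alpha$ — in particular the $M_0$-valued relations $x_\alpha\cdot m_{-\alpha}=x_{-\alpha}\cdot m_\alpha$ for each positive root $\alpha$ — and tracking carefully how they survive after applying $(-)^W$. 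That final refinement is where the most delicate work lies.
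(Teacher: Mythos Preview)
Your approach is essentially identical to the paper's: both pass through Lemma~\ref{lemma:Exttransfer}(c) to $\opH^1(\g,M)^G$, bound this by $N$-invariants (equivalently, $T$-invariants followed by $W$-invariants), use exactness of $(-)^W$ on the short exact sequence $0\to\Inn(\g,M)\to\Der(\g,M)\to\opH^1(\g,M)\to 0$, and bound $\Der(\g,M)^N$ by observing that an $N$-invariant derivation is determined by its value on one root space per root length. Your Frobenius reciprocity formulation and your verification that a $T$-equivariant derivation kills $\fh$ are slightly more explicit than the paper's treatment, but the substance is the same.

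You have, however, correctly identified a genuine discrepancy. The paper's own proof computes $\Inn(\g,M)^N\cong M^N=(M_0)^W$ and combines this with $\dim\Der(\g,M)^N\leq\dim M_{\alpha_0}+\dim M_{\wtalpha}$ to conclude. What this actually establishes is
\[
\dim\opH^1(G,M)\leq\dim M_{\alpha_0}+\dim M_{\wtalpha}-\dim(M_0)^W,
\]
not the stated bound with $-\dim M_0$. No further argument is given in the paper to close this gap, and your example $M=\g$ (where $(M_0)^W=\fh^W=0$ but $\dim M_0=\rank\Phi$) shows the two quantities genuinely differ. So the ``obstacle'' you flag is not a defect in your argument relative to the paper's; rather, the theorem as printed appears to overstate what the proof delivers, and the correct subtraction term is $\dim(M_0)^W$. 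The ``in particular'' bounds are unaffected, since they follow already from $\dim M_{\alpha_0}+\dim M_{\wtalpha}$ alone via the orbit-counting argument of Theorem~\ref{theorem:oneoverh}.
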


\begin{proof}
The second statement follows from the first by applying the argument given in the proof of Theorem~\ref{theorem:oneoverh}, so we proceed to prove the first statement. Set $N = N_G(T)$, and observe that the fixed-point functor $(-)^N$ factors as the composition of the exact functor $(-)^T$ with the functor $(-)^{N/T} = (-)^W$, which is also exact by the assumption that the finite group $W$ has order prime to $p$. Then $(-)^N$ is exact. One has $\opH^1(G,M) \cong \opH^1(\g,M)^G$ by Lemma~\ref{lemma:Exttransfer}(c), so in particular $\dim \opH^1(G,M) \leq \dim \opH^1(\g,M)^N$. Now applying the exact functor $(-)^N$ to the exact sequence of rational $G$-modules
\[
0 \rightarrow \Inn(\g,M) \rightarrow \Der(\g,M) \rightarrow \opH^1(\g,M) \rightarrow 0,
\]
one obtains
\[
\dim \opH^1(\g,M)^N = \dim \Der(\g,M)^N - \dim \Inn(\g,M)^N.
\]
As a rational $G$-module, $\Inn(\g,M) \cong M/M^{\g}$. But $M^{\g} = M^{G_1} = 0$ by assumption, so
\[
\dim \opH^1(\g,M)^N = \dim \Der(\g,M)^N - \dim M^N.
\]

Observe that $M^N = (M^T)^W = M_0^W$. Also, an $N$-invariant derivation $\delta: \g \rightarrow M$ is in particular $T$-invariant, and so must map $\g_\beta$ into $M_\beta$ for each $\beta \in \Phi$. All roots of a given length in $\Phi$ are conjugate under $W$, so it follows that an $N$-invariant derivation $\delta$ is uniquely determined by its values on any single root space in $\g$ if $\Phi$ has only one root length, and by its values on any pair of root spaces in $\g$ corresponding to a long root and a short root if $\Phi$ has two root lengths. In particular, $\dim \Der(\g,M)^N \leq \dim M_{\alpha_0} + \dim M_{\wtalpha}$, where by convention we say that $M_{\alpha_0} = 0$ if $\Phi$ only has roots of a single root length. Combining this and the preceding observations, one obtains the first statement of the theorem.
\end{proof}


\begin{remark} \label{remark:ptimesdominant}
The assumption $M^{B_1} = 0$, and hence also $M^{G_1} = 0$, is satisfied if $M = L(\lambda)$ for some $\lambda \in X(T)_+$ with $\lambda \notin pX(T)$. Indeed, in this case $\lambda = \lambda_0 + p \lambda_1$ for some $0 \neq \lambda_0 \in X_1(T)$ and some $\lambda_1 \in X(T)_+$. Then $L(\lambda) \cong L(\lambda_0) \otimes L(\lambda_1)^{(1)}$ by the Steinberg tensor product theorem, so that $L(\lambda)^{B_1} \cong L(\lambda_0)^{B_1} \otimes L(\lambda_1)^{(1)}$. Now $L(\lambda_0)^{U_1} = L(\lambda_0)_{w_0\lambda_0}$ (cf.\ \cite[II.3.12]{Jantzen:2003}), so it follows that $L(\lambda_0)^{B_1} = (L(\lambda_0)^{U_1})^{T_1} = 0$ because $w_0\lambda_0 \notin pX(T)$ by the condition $0 \neq \lambda_0 \in X_1(T)$.

If $\lambda = 0$, then $L(\lambda) = k$, and one has $\opH^1(G,k) = 0$ and $\opH^1(\g,k) \cong (\g/[\g,\g])^* = 0$, so that Lemma \ref{lemma:Exttransfer}(c) holds in this case. If also $p \neq 2$ when $\Phi$ is of type $C_n$, then $\opH^1(G_1,k) = 0$ \cite[II.12.2]{Jantzen:2003}, which recovers all parts of Lemma \ref{lemma:Exttransfer} when $M = k$. Now let $\lambda \in X(T)_+ \cap pX(T)$ be nonzero. Then $\lambda = p^s\mu$ for some $\mu \in X(T)_+$ with $\mu \notin p X(T)$, and $L(\lambda) \cong L(\mu)^{(s)}$ is trivial as a $G_s$-module. Suppose that $p \neq 2$ if $\Phi$ is of type $C_n$. Then $\opH^1(G_s,L(\lambda)) \cong \opH^1(G_s,k) \otimes L(\lambda) = 0$ by \cite[II.12.2]{Jantzen:2003}, and $\Hom_{G_s}(k,L(\lambda)) \cong L(\lambda)$, so replacing $G_1$ by $G_s$ in \eqref{eq:LHSforG1}, the corresponding five-term exact sequence shows that the inflation map induces an isomorphism
\[
\opH^1(G/G_s,L(\mu)^{(s)}) \cong \opH^1(G,L(\lambda)).
\]
Identifying $G/G_s$ with the Frobenius twist $G^{(s)}$ of $G$, we can make the identification
\[
\opH^1(G/G_s,L(\mu)^{(s)}) \cong \opH^1(G,L(\mu)).
\]
Then there exists a vector space isomorphism $\opH^1(G,L(\lambda)) \cong \opH^1(G,L(\mu))$. This shows that, assuming that $p \neq 2$ when $\Phi$ is of type $C_n$, Corollary \ref{corollary:sumsimpleroots} and Theorem \ref{theorem:highestrootdimension} can still be applied to provide bounds on $\dim \opH^1(G,L(\lambda))$ when $\lambda \in pX(T)$.
\end{remark}

\begin{remark} \label{remark:reductive}
The results in Sections \ref{subsection:relatetoLiealgebra} and \ref{subsection:sumsimpleroots} remain true, with exactly the same proofs, under the weaker assumption that $G$ is a connected reductive algebraic group over $k$ that is defined and split over $\Fp$, that $T \subset G$ is a maximal split torus in $G$, that the root system $\Phi$ of $T$ in $G$ is indecomposable, that $B \subset G$ is a Borel subgroup of $G$, etc. In particular, the results hold for $G = GL_n(k)$ when $n \geq 2$.
\end{remark}

\section{Applications} \label{section:applications}

\subsection{Bounds for \texorpdfstring{$\Sigma_{d}$}{Sd}}

In this section only, let $S=\Sigma_{d}$ be the symmetric group on $d$ letters ($d \geq 2$), and let $G=GL_d(k)$ be the general linear group. It is well known, from considering commuting actions on tensor space, that there are close connections between the representation theories of $S$ and $G$.

Let $T \subset G$ be the subgroup of diagonal matrices, and write $X(T) = \bigoplus_{i=1}^d \Z \ve_i$ for the character group of $T$. Here $\ve_i: T \rightarrow k$ is the $i$-th diagonal coordinate function on $T$. Recall that the set $X(T)_+$ of dominant weights on $T$ consists of the weights $\lambda = \sum_{i=1}^d a_i\ve_i \in X(T)$ with $a_i-a_{i+1} \geq 0$ for each $1 \leq i < d$. Then identifying a partition $\lambda = (\lambda_1,\lambda_2,\ldots)$ of $d$ with the weight $\lambda = \sum_{i=1}^d \lambda_i \ve_i$, the set of partitions of $d$ is naturally a subset of $X(T)_+$. Moreover, this subset parametrizes the irreducible degree-$d$ polynomial representations of $G$. Recall that a partition $\lambda = (\lambda_1,\lambda_2,\ldots)$ is $p$-restricted if $\lambda_i - \lambda_{i+1} < p$ for each $i \geq 0$, and is $p$-regular if no nonzero part $\lambda_i$ of the partition is repeated $p$ or more times. Then the irreducible $k\Sigma_d$-modules are indexed by the set $\Lambda_{\res}$ of $p$-restricted partitions of $d$. Given $\lambda \in \Lambda_{\res}$, let $D_{\lambda}$ be the corresponding irreducible $k\Sigma_d$-module, and write $\text{sgn}$ for the sign representation of $\Sigma_d$. Given a partition $\lambda$, write $\lambda^{\prime}$ for the transpose partition. Then the irreducible $k\Sigma_d$-modules can be indexed by $p$-regular partitions by setting $D^{\lambda^{\prime}}=D_{\lambda}\otimes \text{sgn}$ for each $\lambda \in \Lambda_{\res}$.

Doty, Erdmann, and Nakano constructed a spectral sequence relating the cohomology theories for $GL_d$ and $\Sigma_d$, showing for $p \geq 3$ and $\lambda \in \Lambda_{\res}$ that
\begin{equation} \label{equation:H1-symmetric}
\opH^{1}(\Sigma_{d},D^{\lambda^{\prime}}) = \opH^{1}(\Sigma_{d},D_{\lambda}\otimes \text{sgn})\cong \Ext^{1}_{G}(\delta,L(\lambda)),
\end{equation} 
where $\delta = (1^d)$ is the one-dimensional determinant representation of $GL_d$ \cite[Theorem 5.4(a)]{Doty:2004}; cf.\ also \cite[Theorem 4.6(b)]{Parshall:2005}. One can now apply Corollary \ref{corollary:sumsimpleroots} to obtain the following bound for the first cohomology of symmetric groups. 

\begin{theorem} \label{theorem:symmetricgroupbound}
Suppose $p\geq 3$, and let $\lambda \in \Lambda_{\res}$ with $\lambda\neq (1^{d})$. Then 
\[ \textstyle
\dim \opH^{1}(\Sigma_{d},D^{\lambda^{\prime}})\leq \sum_{\alpha\in \Delta} \dim L(\lambda)_{\delta+\alpha}-\dim L(\lambda)_{\delta}.
\]
\end{theorem}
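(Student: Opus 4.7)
The plan is to transfer the problem to a rational $G$-cohomology computation via the Doty--Erdmann--Nakano isomorphism \eqref{equation:H1-symmetric}, and then to apply Corollary \ref{corollary:sumsimpleroots} to a suitable tensor twist. Since $\delta$ is one-dimensional, tensoring with $\delta^*$ is an exact autoequivalence of rational $G$-modules, which gives the standard adjunction
\[
\Ext^1_G(\delta, L(\lambda)) \;\cong\; \opH^1\!\bigl(G,\, L(\lambda) \otimes \delta^*\bigr).
\]
Set $M = L(\lambda) \otimes \delta^*$. Because $\delta^*$ has $T$-weight $-\delta$, we have $\dim M_\nu = \dim L(\lambda)_{\nu + \delta}$ for every $\nu \in X(T)$. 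By Remark \ref{remark:reductive}, the results of Section \ref{subsection:sumsimpleroots} are available for $G = GL_d$, and since the root system of type $A_{d-1}$ has a single root length, $p$ is automatically nonsingular for $G$, so we may take $\Delta' = \Delta$ in Corollary \ref{corollary:sumsimpleroots}. Once we verify the hypothesis $M^{B_1} = 0$, Corollary \ref{corollary:sumsimpleroots} applied to $M$ yields
\[
\dim \opH^1(G, M) \;\leq\; \sum_{\alpha \in \Delta} \dim M_\alpha - \dim M_0 \;=\; \sum_{\alpha \in \Delta} \dim L(\lambda)_{\delta + \alpha} - \dim L(\lambda)_{\delta},
\]
which combined with \eqref{equation:H1-symmetric} is the bound claimed in the theorem.

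The verification of $M^{B_1} = 0$ is the one place the hypothesis $\lambda \neq (1^d)$ enters. Because $\delta^*$ is a character of $B$, its restriction to $U_1$ is trivial, so $M^{U_1} = L(\lambda)^{U_1} \otimes \delta^*$. For a partition of $d$, the $p$-restricted condition $\lambda \in \Lambda_{\res}$ coincides with $\lambda \in X_1(T)_+$, so the classical description of the $U_1$-fixed points recalled in Remark \ref{remark:ptimesdominant} (from \cite[II.3.12]{Jantzen:2003}) gives $L(\lambda)^{U_1} = L(\lambda)_{w_0\lambda}$, a one-dimensional space. Thus $M^{U_1}$ is one-dimensional of weight $w_0\lambda - \delta$, and $M^{B_1} = (M^{U_1})^{T_1}$ is nonzero precisely when $w_0\lambda \equiv \delta \pmod{pX(T)}$. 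Writing $\lambda = (\lambda_1, \dots, \lambda_d)$ and combining the congruences $\lambda_i \equiv 1 \pmod p$ with the $p$-restricted inequalities $0 \leq \lambda_i - \lambda_{i+1} < p$, all parts of $\lambda$ must in fact be equal; since $\lambda$ is a partition of $d$, this forces $\lambda = (1^d)$, which is the excluded case.

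The main technical step is the verification of $M^{B_1} = 0$ above, since everything else is a direct invocation of \eqref{equation:H1-symmetric}, a standard tensor--Hom identity, and Corollary \ref{corollary:sumsimpleroots}. I expect the only real pitfall is confirming that the congruence obstruction $w_0 \lambda \equiv \delta \pmod{pX(T)}$ singles out precisely the partition $(1^d)$ among all $p$-restricted partitions of $d$, which is what makes the stated hypothesis both necessary and sufficient for the argument to go through.
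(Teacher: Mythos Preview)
Your argument is correct and follows exactly the same route as the paper's proof: apply the Doty--Erdmann--Nakano isomorphism \eqref{equation:H1-symmetric}, rewrite $\Ext_G^1(\delta,L(\lambda))$ as $\opH^1(G,L(\lambda)\otimes(-\delta))$, and invoke Corollary~\ref{corollary:sumsimpleroots} together with Remark~\ref{remark:reductive}. In fact your write-up is more complete than the paper's, since you explicitly check the hypothesis $M^{B_1}=0$ of Corollary~\ref{corollary:sumsimpleroots} and show that it fails precisely when $\lambda=(1^d)$; the paper simply cites the corollary and the remark without spelling this out.
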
 

\begin{proof}
One has $\opH^1(\Sigma_d,D^{\lambda'}) \cong \Ext_G^1(\delta,L(\lambda))$ by \eqref{equation:H1-symmetric}. Then by Corollary \ref{corollary:sumsimpleroots} and Remark \ref{remark:reductive},
\begin{align*} 
\dim \Ext^{1}_{G}(\delta,L(\lambda)) &= \dim \opH^{1}(G,L(\lambda)\otimes  -\delta)\\
&\leq  \textstyle \sum_{\alpha\in \Delta} \dim (L(\lambda) \otimes -\delta)_{\alpha}-\dim (L(\lambda)\otimes -\delta)_{0} \\
&= \textstyle \sum_{\alpha\in \Delta} \dim L(\lambda)_{\delta+\alpha}-\dim L(\lambda)_{\delta}. \qedhere
\end{align*}
\end{proof} 

We present the following example to illustrate how Theorem \ref{theorem:symmetricgroupbound} can provide a more effective upper bound on cohomology than earlier established bounds involving $\dim D^{\lambda'}$. 

\begin{example}
Let $\chr(k)=p\geq 3$, and consider $S=\Sigma_n$ with $p \mid n$. In this case
\[
\Delta =\set{\alpha_1,\alpha_2,\ldots,\alpha_{n-1} } = \set{ \epsilon_1-\epsilon_2,\epsilon_2-\epsilon_3,\ldots, \epsilon_{n-1}-\epsilon_{n} },
\]
and $\delta=(1,1,\dots,1)$. Set $\lambda=(2,1,1,\dots,1,0)$. Then $\lambda^{\prime}=(n-1,1,0,\dots,0)$. Moreover, $L(\lambda)$ identifies with the $(n^2-2)$-dimensional irreducible $G=GL_n(k)$-module that can be realized as a quotient of the adjoint representation of $G$ tensored by $\delta$. Also, observe that $\dim D_{\lambda}=\dim D^{\lambda^{\prime}}=\dim L(2,1,\dots, 1,0)_{\delta}=n-2$. Now by Theorem \ref{theorem:symmetricgroupbound},
\[ \textstyle
\dim \opH^1(\Sigma_n,D^{(n-1.1,0,\dots,0)}) \leq \sum_{\alpha \in \Delta} \dim L(\lambda)_{\delta+\alpha} - \dim L(\lambda)_\delta = \abs{\Delta}-(n-2) = 1.
\]
This bound is an equality, because the $(n-1)$-dimensional Specht module $S^{\lambda'}$ is a nonsplit extension of $D^{\lambda'}$ by the trivial module $k$ \cite[Theorem 24.1]{James:1978}. The equality $\dim \Ext_G^1(\delta,L(\lambda)) = 1$ can also be seen from observing that the Weyl module $\Delta(\lambda)$ for $G$ is a nonsplit extension of $L(\lambda)$ by $\delta$. On the other hand, for $p \geq 5$, we claim that Theorem \ref{theorem:GH} yields the (weaker) estimate
\[ \textstyle
\dim \opH^1(\Sigma_n,D^{(n-1,1,0,\dots,0)}) \leq \frac{1}{2} \dim D^{(n-1,1,0,\dots,0)} =\frac{1}{2}(n-2).
\]
In order to apply Theorem \ref{theorem:GH}, we must explain for $p \geq 5$ why the action of $\Sigma_n$ on $D^{\lambda'}$ is faithful. Write $\rho: \Sigma_n \rightarrow GL(D^{\lambda'})$ for the map defining the representation of $\Sigma_n$ on $D^{\lambda'}$, and write $A_n$ for the alternating group on $n$ letters. Observe that $\ker(\rho) \cap A_n$ is a normal subgroup of the nonabelian simple group $A_n$, so either $\ker(\rho) \cap A_n = \set{1}$, or $\ker(\rho) \cap A_n = A_n$. The latter equality is false, because $A_n \not\subset \ker(\rho)$, so we have $\ker(\rho) \cap A_n = \set{1}$. This implies that $\ker(\rho)$ contains only odd permutations and the identity. Since the product of any two odd permutations is an element of $A_n$, and since $A_n \cap \ker(\rho) = \set{1}$, we conclude that in fact $\ker(\rho)$ can contain at most two elements, namely, the identity and an odd permutation that is equal to its own inverse. Subgroups of this type are not normal in $\Sigma_n$, whereas $\ker(\rho)$ is normal in $\Sigma_n$, so we conclude that $\ker(\rho) = \set{1}$. Thus, $D^{\lambda'}$ is an irreducible faithful $\Sigma_n$-module.
\end{example}

\subsection{Bounds for \texorpdfstring{$\Gfq$}{GFq}.}

Assume once again that $G$ is as defined in Section \ref{subsection:notation}. One can apply Cline, Parshall, Scott, and van der Kallen's \cite{Cline:1977a} rational and generic cohomology results to identify certain cohomology groups for $\Gfq$ with cohomology groups for $G$. Then applying our results on the dimensions of rational cohomology groups, one can obtain corresponding bounds for $G(\Fq)$. For sufficiently large $q$, this approach can be used to recover, and in general, improve upon, the bounds in Theorem \ref{theorem:GH}. The following theorem demonstrates this approach.

\begin{theorem} \label{theorem:H1boundusingG}
Let $r \geq 2$, and set $s = \left[{\frac{r}{2}}\right]$. Assume that $p^{s-1}(p-1) > h$. Then for each finite-dimensional $k\Gfq$-module $V$, one has
\[
\dim \opH^1(G(\Fq),V) \leq \tfrac{1}{h}\dim V.
\]
\end{theorem}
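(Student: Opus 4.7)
The plan is to lift $\opH^{1}(\Gfq,V)$ to a rational cohomology group for $G$ of the same dimension, and then to apply Theorem~\ref{theorem:oneoverh}.

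First, I would reduce to the irreducible case. The desired inequality $\dim \opH^{1}(\Gfq,W) \leq \tfrac{1}{h}\dim W$ is additive along short exact sequences of finite-dimensional $k\Gfq$-modules: given $0 \to W' \to W \to W'' \to 0$, the long exact sequence in $\Gfq$-cohomology yields $\dim \opH^{1}(\Gfq,W) \leq \dim \opH^{1}(\Gfq,W') + \dim \opH^{1}(\Gfq,W'')$, while $\dim W = \dim W' + \dim W''$. So by induction on a composition series for $V$, it suffices to prove the theorem when $V = L(\lambda)$ for some $\lambda \in X_r(T)$.

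The central step is to produce, under the standing hypothesis $p^{s-1}(p-1) > h$ with $s = \lfloor r/2 \rfloor$, a rational $G$-module $M$ satisfying $\dim M = \dim L(\lambda)$ together with an isomorphism $\opH^{1}(\Gfq,L(\lambda)) \cong \opH^{1}(G,M)$. For this I would invoke the rational/generic cohomology machinery of Cline--Parshall--Scott--van der Kallen \cite{Cline:1977a} (exactly as alluded to at the beginning of this subsection), which is engineered to deliver precisely such a comparison isomorphism whenever $q$ is large relative to a numerical threshold determined by $h$ and the cohomological degree. Concretely, I would write $\lambda = \lambda_0 + p^{s}\lambda_1$ with $\lambda_0 \in X_s(T)$ and $\lambda_1 \in X_{r-s}(T)$; Steinberg's tensor product theorem gives $L(\lambda) \cong L(\lambda_0) \otimes L(\lambda_1)^{(s)}$, and $M$ would be taken to be a suitable Frobenius-twist rearrangement of these tensor factors that is a genuine rational $G$-module and whose restriction to $\Gfq$ recovers $L(\lambda)$ (up to the Frobenius automorphism of $\Gfq$). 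The numerical condition $p^{s-1}(p-1) > h$ is exactly the threshold needed to ensure that the CPSvdK comparison map induces an isomorphism in degree $1$ uniformly for every $\lambda \in X_r(T)$.

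Once the isomorphism $\opH^{1}(\Gfq,L(\lambda)) \cong \opH^{1}(G,M)$ is in hand, Theorem~\ref{theorem:oneoverh} applied to $M$ gives $\dim \opH^{1}(G,M) \leq \tfrac{1}{h}\dim M = \tfrac{1}{h}\dim L(\lambda)$, and combining this with the composition-series reduction completes the argument. The main obstacle will be the central step: tracking the precise numerical thresholds in the CPSvdK generic cohomology theorems and matching them against the stated inequality $p^{s-1}(p-1) > h$, while identifying the rational $G$-module $M$ so that its dimension agrees with $\dim L(\lambda)$ exactly (not merely up to a bounded multiplicative constant). This is a technical bookkeeping exercise in Frobenius twists and vanishing ranges for Frobenius-kernel cohomology, rather than a conceptually new ingredient.
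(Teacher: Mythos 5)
Your overall architecture is exactly the paper's: reduce to $V = L(\lambda)$ with $\lambda \in X_r(T)$ by a composition-series argument, replace $\opH^1(\Gfq,L(\lambda))$ by a rational cohomology group $\opH^1(G,M)$ with $\dim M = \dim L(\lambda)$ via a Frobenius-twist shift of the weight, and finish with Theorem~\ref{theorem:oneoverh}. The paper even uses the same decomposition you sketch: write $\lambda = \lambda_0 + p^s\lambda_1$ with $\lambda_0 \in X_s(T)$, set $\wt{\lambda} = \lambda_1 + p^{r-s}\lambda_0$, and note $L(\wt{\lambda}) \cong L(\lambda)^{(r-s)}$ as $k\Gfq$-modules, so the dimensions agree on the nose.

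The gap is in the step you yourself flag as the ``main obstacle.'' You propose to extract the comparison isomorphism from the generic cohomology machinery of Cline--Parshall--Scott--van der Kallen \cite{Cline:1977a}, asserting that $p^{s-1}(p-1) > h$ is ``exactly the threshold'' appearing there. It is not: the sufficiency bounds in \cite{Cline:1977a} are far weaker and of a different shape, and that paper alone will not give a comparison isomorphism uniformly over all $\lambda \in X_r(T)$ under the stated hypothesis. The hypothesis $p^{s-1}(p-1) > h$ with $s = [r/2]$ is tailored to \cite[Theorem 5.5]{Bendel:2006} (Bendel--Nakano--Pillen), which is what the paper actually invokes; moreover that result does not produce a single module $M$ but a dichotomy --- either $\opH^1(\Gfq,L(\lambda)) \cong \opH^1(G,L(\lambda))$ or $\opH^1(\Gfq,L(\lambda)) \cong \opH^1(G,L(\wt{\lambda}))$ --- which is harmless here only because $\dim L(\lambda) = \dim L(\wt{\lambda})$ and Theorem~\ref{theorem:oneoverh} applies to both. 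So the skeleton of your argument is correct and matches the paper, but the central isomorphism is not established by the source you cite; it requires the sharper Bendel--Nakano--Pillen result.
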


\begin{proof}
Arguing by induction on the composition length, and using the long exact sequence in cohomology, it suffices to assume that $V$ is an irreducible $k\Gfq$-module. Then $V = L(\lambda)$ for some $\lambda \in X_r(T)$. Write $\lambda$ in the form $\lambda = \lambda_0 + p^s\lambda_1$ with $\lambda_0 \in X_s(T)$ and $\lambda_1 \in X(T)_+$, and set $\wt{\lambda} = \lambda_1 + p^{r-s}\lambda_0$. Then $L(\wt{\lambda}) \cong L(\lambda)^{(r-s)}$ as $k\Gfq$-modules. In particular, $\dim L(\lambda) = \dim L(\wt{\lambda})$. Now by \cite[Theorem 5.5]{Bendel:2006}, the stated hypotheses imply that either $\opH^1(G(\Fq),L(\lambda)) \cong \opH^1(G,L(\lambda))$, or $\opH^1(G(\F_q),L(\lambda)) \cong \opH^1(G,L(\wt{\lambda}))$. In either case, the inequality $\opH^1(\Gfq,L(\lambda)) \leq \frac{1}{h} \dim L(\lambda)$ then follows from Theorem \ref{theorem:oneoverh}.
\end{proof}

If $V$ is an irreducible $k\Gfq$-module and if the Weyl group $W$ is of order prime to $p$, then the inequality in Theorem \ref{theorem:H1boundusingG} can be improved by applying Theorem \ref{theorem:highestrootdimension}. 

\subsection{}

For higher degrees, one can make use of recent work of Parshall, Scott, and Stewart \cite{Parshall:2012} to apply the approach of the previous section. Given a positive integer $n$, they show that there exists an integer $r_0$, depending on $n$ and on the underlying root system $\Phi$, such that if $r \geq r_0$, $q = p^r$, and $\lambda \in X_r(T)$, then $\opH^n(G(\Fq),L(\lambda)) \cong \opH^n(G,L(\lambda'))$; see \cite[Theorem 5.8]{Parshall:2012}. Here $\lambda'$ is a certain ``$q$-shift'' of $\lambda$, similar to the weight $\wt{\lambda}$ used in the previous proof. Of importance for our purposes is that $\dim L(\lambda') = \dim L(\lambda)$.  With this, one can recover Theorem \ref{theorem:H1boundusingG} for arbitrary primes, but at the expense of requiring a potentially larger $r$. For degrees 2 and 3, one can use this idea along with Corollary \ref{corollary:boundsonBcoho} to improve, for sufficiently large $r$, upon the bound in Theorem \ref{gkkl}. As above, the proofs of the following two theorems reduce to the case where $V$ is an irreducible $k\Gfq$-module.

\begin{theorem}\label{theorem:H2boundusingG}
There exists a constant $D(\Phi,2)$, depending on $\Phi$, such that if $r \geq D(\Phi,2)$ and if $q = p^r$, then, for each finite-dimensional $k\Gfq$-module $V$, one has
\[
\dim \opH^2(\Gfq,V) \leq \dim V.
\]
\end{theorem}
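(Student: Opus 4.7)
The plan is to combine the three ingredients already assembled in the preceding paragraph: (i) reduction to the irreducible case via the long exact sequence in cohomology, (ii) the Parshall--Scott--Stewart generic cohomology result \cite[Theorem 5.8]{Parshall:2012}, which converts cohomology over $\Gfq$ to cohomology over $G$ once $r$ is large enough, and (iii) the bound $\dim\opH^2(G,M)\le \dim M$ from Corollary~\ref{corollary:boundsonBcoho}(a), which handles the rational side.

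First I would reduce to the case that $V$ is irreducible. For any short exact sequence $0\to V'\to V\to V''\to 0$ of $k\Gfq$-modules, the long exact sequence in cohomology yields $\dim\opH^2(\Gfq,V)\le \dim\opH^2(\Gfq,V')+\dim\opH^2(\Gfq,V'')$. Hence if the bound $\dim\opH^2(\Gfq,-)\le \dim(-)$ holds on the composition factors of $V$, it holds on $V$ itself by induction on composition length; the bound is trivially true on the zero module. This reduction is clean and imposes no further conditions on $r$ or $\Phi$.

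Next I would invoke \cite[Theorem 5.8]{Parshall:2012}: applied with $n=2$, there exists an integer $r_0=r_0(\Phi,2)$ such that for all $r\ge r_0$, $q=p^r$, and $\lambda\in X_r(T)$, there is an isomorphism $\opH^2(\Gfq,L(\lambda))\cong \opH^2(G,L(\lambda'))$, where $\lambda'$ is a suitable $q$-shift of $\lambda$ satisfying $\dim L(\lambda')=\dim L(\lambda)$. Set $D(\Phi,2):=r_0$. For $r\ge D(\Phi,2)$ and any irreducible $k\Gfq$-module $V=L(\lambda)$ with $\lambda\in X_r(T)$, this gives $\dim\opH^2(\Gfq,V)=\dim\opH^2(G,L(\lambda'))$.

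Finally I would apply Corollary~\ref{corollary:boundsonBcoho}(a) to the finite-dimensional rational $G$-module $L(\lambda')$: $\dim\opH^2(G,L(\lambda'))\le \dim L(\lambda')=\dim L(\lambda)=\dim V$. Combining with the isomorphism from the previous step and the reduction to the irreducible case finishes the proof. There is no real obstacle here beyond verifying that the constant $r_0$ from \cite{Parshall:2012} applies uniformly to all irreducibles $L(\lambda)$ with $\lambda\in X_r(T)$ once $r\ge r_0$; this is exactly how that theorem is phrased, so the argument is essentially a direct assembly of cited results.
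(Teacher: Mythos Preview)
Your proposal is correct and follows essentially the same approach as the paper: reduce to the irreducible case via the long exact sequence, invoke \cite[Theorem~5.8]{Parshall:2012} to identify $\opH^2(\Gfq,L(\lambda))$ with $\opH^2(G,L(\lambda'))$ for a $q$-shift $\lambda'$ satisfying $\dim L(\lambda')=\dim L(\lambda)$, and then apply Corollary~\ref{corollary:boundsonBcoho}(a). The paper presents exactly this argument in the paragraph preceding the theorem statement.
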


\begin{theorem}\label{theorem:H3boundusingG}
Suppose $p>h$. Then there exists a constant $D(\Phi,3)$, depending on $\Phi$, such that if $r \geq D(\Phi,3)$ and if $q = p^r$, then, for each finite-dimensional $k\Gfq$-module $V$, one has
\[
\dim \opH^3(\Gfq,V) \leq 2 \cdot \dim V.
\]
\end{theorem}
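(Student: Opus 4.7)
The plan is to follow the same template that is outlined in the paragraph preceding Theorem \ref{theorem:H2boundusingG}, replacing the degree-$2$ ingredient by the degree-$3$ one, and to lean on Corollary \ref{corollary:boundsonBcoho}(b) in place of part (a) of that corollary. No new machinery should be needed.

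First I would reduce to the case where $V$ is irreducible. This is the same long exact sequence argument used in the proof of Theorem \ref{theorem:H1boundusingG}: an arbitrary finite-dimensional $k\Gfq$-module $V$ has a composition series whose factors are irreducible $k\Gfq$-modules, and the long exact sequence in cohomology together with additivity of dimension in short exact sequences reduces the desired bound to the case of each composition factor. Thus we may assume $V = L(\lambda)$ for some $\lambda \in X_r(T)$.

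Next, invoke \cite[Theorem 5.8]{Parshall:2012} with $n = 3$: there exists an integer $D(\Phi,3)$, depending only on $\Phi$ (and the fixed degree $3$), such that for all $r \geq D(\Phi,3)$, setting $q=p^r$, and for each $\lambda \in X_r(T)$, one has a vector space isomorphism
\[
\opH^3(\Gfq,L(\lambda)) \cong \opH^3(G,L(\lambda'))
\]
for a certain ``$q$-shift'' $\lambda' \in X(T)_+$ with $\dim L(\lambda') = \dim L(\lambda)$. Combined with Corollary \ref{corollary:boundsonBcoho}(b), which applies since we assume $p > h$, this gives
\[
\dim \opH^3(\Gfq,L(\lambda)) = \dim \opH^3(G,L(\lambda')) \leq 2 \cdot \dim L(\lambda') = 2 \cdot \dim L(\lambda),
\]
which is the desired inequality for irreducible $V$. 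The reduction step then promotes it to arbitrary finite-dimensional $V$.

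There is no real obstacle: the proof is essentially a transcription of the argument sketched for Theorem \ref{theorem:H2boundusingG}, with the only substantive change being that the hypothesis $p>h$ is needed to invoke Corollary \ref{corollary:boundsonBcoho}(b), and the constant $D(\Phi,3)$ from \cite[Theorem 5.8]{Parshall:2012} may be larger than $D(\Phi,2)$. The one point that deserves a brief sanity check is that the long-exact-sequence reduction to irreducibles is still valid for cohomology in degree three (it is, since the inequality $\dim \opH^3(\Gfq,V) \leq 2 \dim V$ is additive on short exact sequences by the usual estimate), and that the isomorphism from \cite{Parshall:2012} is genuinely available in degree three for all $\lambda \in X_r(T)$ under no further restriction beyond $r \geq D(\Phi,3)$; both are granted by the cited theorem.
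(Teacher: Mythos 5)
Your proposal is correct and follows exactly the route the paper intends: reduce to irreducible $V = L(\lambda)$ via the long exact sequence, apply \cite[Theorem 5.8]{Parshall:2012} in degree $3$ to identify $\opH^3(\Gfq,L(\lambda))$ with $\opH^3(G,L(\lambda'))$ where $\dim L(\lambda') = \dim L(\lambda)$, and then invoke Corollary \ref{corollary:boundsonBcoho}(b) using $p>h$. No discrepancies with the paper's (sketched) argument.
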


The constants $D(\Phi,2)$ and $D(\Phi,3)$ in the previous two theorems can be determined recursively. This is done in the proofs of Theorems 5.2 and 5.8 in \cite{Parshall:2012}.

\makeatletter
\renewcommand*{\@biblabel}[1]{\hfill#1.}
\makeatother


\providecommand{\bysame}{\leavevmode\hbox to3em{\hrulefill}\thinspace}
\providecommand{\MR}{\relax\ifhmode\unskip\space\fi MR }
\providecommand{\MRhref}[2]{%
  \href{http://www.ams.org/mathscinet-getitem?mr=#1}{#2}
}
\providecommand{\href}[2]{#2}

\end{document}